\setlist[enumerate]{label={(\arabic*)}}
\newcommand{\red}[1]{\textcolor{black}{#1}}
\newcommand{\blue}[1]{\textcolor{black}{#1}}
\newcommand{\Hstar}{{\cal H}^{*}}
\newcommand{\calH}{{\cal H}}
\newtheorem{theorem}{Theorem}[section]
\newtheorem{lemma}[theorem]{Lemma}
\newtheorem{claim}[theorem]{Claim}
\newtheorem{obs}[theorem]{Observation}
\theoremstyle{definition}
\newtheorem{problem}{Open Problem}
\newcommand{\squishlist}{
 \begin{list}{$\bullet$}
  { \setlength{\itemsep}{0pt}
     \setlength{\parsep}{3pt}
     \setlength{\topsep}{3pt}
     \setlength{\partopsep}{0pt}
     \setlength{\leftmargin}{2.5em}
     \setlength{\labelwidth}{1em}
     \setlength{\labelsep}{0.5em} } }
\newcommand{\squishlisttwo}{
 \begin{list}{$\triangleright$}
  { \setlength{\itemsep}{0pt}
     \setlength{\parsep}{0pt}
    \setlength{\topsep}{0pt}
    \setlength{\partopsep}{0pt}
    \setlength{\leftmargin}{2em}
    \setlength{\labelwidth}{1.5em}
    \setlength{\labelsep}{0.5em} } }
\newcommand{\squishend}{
  \end{list}  }
\newcommand{\forbid}{P_3+\ell P_1}
\begin{document}

\title{Vertex-critical $(P_3+\ell P_1)$-free and vertex-critical (gem, co-gem)-free graphs}
\author{Tala Abuadas\\
\small Department of Physics and Computer Science\\
\small Wilfrid Laurier University\\
\small Waterloo, ON Canada\\
\and
Ben Cameron\\
\small Department of Computing Science\\
\small The King's University\\
\small Edmonton, AB Canada\\
\small ben.cameron@kingsu.ca\\
\and
Ch\'{i}nh T. Ho\`{a}ng\\
\small Department of Physics and Computer Science\\
\small Wilfrid Laurier University\\
\small Waterloo, ON Canada\\
\small choang@wlu.ca\\
\and
Joe Sawada\\
\small School of Computer Science\\
\small University of Guelph\\
\small Guelph, ON Canada\\
\small jsawada@uoguelph.ca\\
}

\date{\today}

\maketitle

\begin{abstract}

A graph $G$ is $k$-vertex-critical if $\chi(G)=k$ but $\chi(G-v)<k$ for all $v\in V(G)$ where $\chi(G)$ denotes the chromatic number of $G$. We show that there are only finitely many $k$-critical $(P_3+\ell P_1)$-free graphs for all $k$ and all $\ell$. Together with previous results, the only graphs $H$ for which it is unknown if there are an infinite number of $k$-vertex-critical $H$-free graphs is $H=(P_4+\ell P_1)$ for all $\ell\ge 1$. We consider a restriction on the smallest open case, and show that there are only finitely many  $k$-vertex-critical (gem, co-gem)-free graphs for all $k$, where gem$=\overline{P_4+P_1}$. To do this, we show the stronger result that every vertex-critical (gem, co-gem)-free graph is either complete or a clique expansion of $C_5$. This characterization allows us to give the complete list of all $k$-vertex-critical (gem, co-gem)-free graphs for all $k\le 16$.


\end{abstract}
\section{Introduction}

For a given $k\ge 3$, determining the $k$-colorability graphs is a classic NP-complete problem~\cite{Karp1972}. When the graphs are from a special family, however, polynomial-time algorithms have been developed to determine the $k$-colorability for some (and sometimes all) values of $k$. Families of particular interest are hereditary families defined by one or more forbidden induced subgraphs. A foundational result to this end is the polynomial-time algorithm to determine the $k$-colorability of perfect graphs~\cite{Grotschel1984}. On the negative side, determining the $k$-colorability of $H$-free graphs for any $k\ge 3$ remains NP-complete if $H$ contains an induced cycle~\cite{Maffray1996, KaminskiLozin2007} or 
%
%
claw~\cite{Holyer1981,LevenGail1983}. Thus, assuming P$\neq$NP, if $k$-colorability of $H$-free can be determined in polynomial time for some $k\ge 3$, then $H$ must be the disjoint union of paths.  Seinsche \cite{Seinche} proved that 
%
$P_4$-free graphs are perfect and therefore their $k$-colorability is polynomial-time solvable. A recursive polynomial-time algorithm exploiting dominating structures to determine the $k$-colorability of $P_5$-free graphs for all $k$ was developed in~\cite{Hoang2010}. The strength of this algorithm was further highlighted when it was shown that determining the $k$-colorability of $P_t$-free graphs is NP-complete if $t\ge 7$ and $k\ge 4$ or $t=6$ and $k\ge 5$~\cite{Huang2016}. In the same work, it was also conjectured that the $4$-colorability of $P_6$-free graphs could be determined in polynomial time. This conjecture was proved in a series of two preprints \cite{P6free1,P6free2} (see also a concise conference paper of the results~\cite{P6freeconf}). This completed the complexity dichotomy for determining $4$-colorability of $H$-free graphs when $H$ is connected. For smaller $k$, the $3$-colorability of $P_7$-free graphs can be determined in polynomial time~\cite{Bonomo2018} and the complexity remains an open question for $P_t$-free when $t\ge 8$.

When we consider $H$-free graphs for disconnected $H$, things are still very interesting. The polynomial-time algorithm to determine $k$-colorability of $P_5$-free graphs for any $k$ was later generalized for $(P_5+rP_1)$-free graphs for any $r\ge 0$ in \cite{Couturier2015}. The $3$-colorability of $(P_6+rP_3)$-free graphs can determined in polynomial time for any $r\ge 0$, while for $k\ge 5$, $k$-colorability of $(P_5+P_2)$-free graphs is NP-complete \cite{ChudnovskyHuangSpirklZhong2021}. Very recently it was shown that the $5$-colorability of $rP_3$-free graphs can be determined in polynomial time, while the $k$-colorability of $(r+1)P_4$-free graphs remains NP-complete for any $r\ge 1$ and $k\ge 5$ \cite{HajebiLiSpirkl2021}. It follows that the graph $H$ of smallest order for which the complexity of determining the $k$-colorability of $H$-free graphs is not known for all $k$ is $H=P_3+P_2$, where it is polynomial-time solvable for $k\le 5$ and has unknown complexity for $k\ge 6$.

Beyond the complexity of $k$-colorability algorithms, there is also interest in developing ones that are certifying. An algorithm is \textit{certifying} if together with each output it includes a simple and easily verifiable witness that the output is correct. In the case of $k$-colorability algorithms, a $k$-coloring serves as a certificate for a positive answer, and an induced $(k+1)$-vertex-critical subgraph can be returned to certify a negative answer. It is well-known that if a hereditary family of graphs $\mathcal{F}$ contains only finitely many $(k+1)$-vertex-critical graphs, then a polynomial-time algorithm to determine the $k$-colorability of any graph in $\mathcal{F}$ can be readily implemented by searching for each of the $(k+1)$-vertex-critical graphs as induced subgraphs and returning one as a certificate if found (see \cite{P5banner2019} for more details). In the past decade there has been a push for the development of more certifying algorithms in general, including the strong stance taken by McConnell et al.~\cite{McConnell2011} that ``for complex algorithmic tasks, only certifying algorithms are satisfactory.'' While many of the $k$-colorability algorithms cited above return $k$-colorings if they exists, in most of these cases, how to efficiently return vertex-critical subgraphs remains either unknown or impossible \cite{ChudnovskyStacho2018}. 
However, a linear-time certifying algorithm for determining the $3$-colorability of $P_5$-free graphs was developed by showing that there are exactly 12 $4$-vertex-critical $P_5$-free graphs \cite{Bruce2009,MaffrayGregory2012}. Unfortunately, the following theorem asserts that this positive trend does not continue for larger $k$. 

\begin{theorem}[\cite{Hoang2015}]\label{thm:infif2K2}
If $H$ is not $2K_2$-free and $k \geq 5$, then there is an infinite number of $k$-vertex-critical $H$-free graphs.
\end{theorem}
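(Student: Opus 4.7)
The plan is a two-step argument. First, since $H$ contains $2K_2$ as an induced subgraph (because $H$ is not $2K_2$-free), every $2K_2$-free graph is automatically $H$-free. Thus it suffices, for each $k \geq 5$, to exhibit infinitely many pairwise non-isomorphic $k$-vertex-critical graphs that are themselves $2K_2$-free; this is a strictly stronger conclusion than the one claimed, and if established it settles the theorem uniformly in $H$.

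Second, to produce such a family $\{G_n\}_{n \geq 1}$, I would begin from the single base example $C_5 \vee K_{k-3}$, which is $k$-vertex-critical (the join of vertex-critical graphs $C_5$ and $K_{k-3}$, of chromatic numbers $3$ and $k-3$ respectively, is a $k$-vertex-critical graph) and $2K_2$-free (joins preserve $2K_2$-freeness when both factors are $2K_2$-free, and $C_5$ has no induced $2K_2$). The construction of $G_n$ would then proceed by appending $n$ additional vertices $w_1,\dots,w_n$, pairwise non-adjacent, each adjacent to the entire $K_{k-3}$ and to a specific pair of non-adjacent $C_5$-vertices chosen symmetrically. The $2K_2$-freeness of $G_n$ reduces to a short case analysis that exploits the universal role of the $K_{k-3}$: any induced $2K_2$ must avoid these universal vertices, leaving only a handful of four-vertex configurations inside the $C_5 \cup \{w_i\}$ part to rule out directly.

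The main obstacle is preserving vertex-criticality. Since the base $C_5 \vee K_{k-3}$ already forces $\chi = k$ on its own, any additional vertex appended in a naive fashion will be non-critical (its removal returns to a graph still containing the base obstruction). Overcoming this requires the extra vertices to jointly carry a ``distributed'' $k$-chromatic obstruction, so that each $w_i$ sits in its own local $C_5$-like structure essential to forcing the $k$-th colour, and none of the $v_j$ or $w_i$ is redundant. Concretely, one would need to exhibit, for every vertex $v$ of $G_n$, an explicit proper $(k-1)$-coloring of $G_n - v$, by showing that removing $v$ allows the $C_5$-plus-$\{w_i\}$ part to be properly $2$-coloured together with the $K_{k-3}$'s palette. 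Since only $K_3$ and $C_5$ are $3$-vertex-critical $2K_2$-free graphs, the substitution must be something more subtle than a pure join; the technical heart of the proof is designing the neighborhoods of the $w_i$ so that all three properties ($\chi=k$, $2K_2$-free, every-vertex-critical) hold simultaneously and uniformly in $n$, producing an infinite family whose orders $|V(G_n)|$ grow without bound.
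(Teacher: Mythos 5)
The theorem is imported by the paper from \cite{Hoang2015} and is not proved in the text, so the comparison is against the construction in that reference. Your opening reduction is sound and is indeed the standard first move: since $H$ contains an induced $2K_2$, every $2K_2$-free graph is $H$-free, so it suffices to produce, for each $k\ge 5$, infinitely many $k$-vertex-critical $2K_2$-free graphs. Your join observation also correctly lifts the problem: once one has an infinite family of $5$-vertex-critical $2K_2$-free graphs, joining each member with $K_{k-5}$ yields infinitely many $k$-vertex-critical $2K_2$-free graphs for every $k\ge 5$ (joins with cliques preserve $2K_2$-freeness and raise the criticality by exactly the order of the clique). The entire difficulty is therefore concentrated in exhibiting one infinite family of $5$-vertex-critical $2K_2$-free graphs, and this is precisely where your argument stops.

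The family you sketch---$C_5\vee K_{k-3}$ with pairwise non-adjacent vertices $w_1,\dots,w_n$ each joined to the clique and to a non-adjacent pair on the $C_5$---is not vertex-critical, for exactly the reason you yourself identify: $G_n-w_i$ still contains $C_5\vee K_{k-3}$ as an induced subgraph, so $\chi(G_n-w_i)=k$ and no $w_i$ is critical. Your proposed remedy, that the added vertices should ``jointly carry a distributed $k$-chromatic obstruction,'' is an accurate description of what any working construction must accomplish, but it is stated as a goal rather than achieved: no adjacency rule is specified, no proof that $\chi(G_n)=k$ is given, no $(k-1)$-colourings of the vertex-deleted subgraphs are exhibited, and $2K_2$-freeness of the modified graphs is not verified. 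Since the existence of such a family \emph{is} the content of the theorem, this is a genuine gap rather than a routine omission. By contrast, the construction in \cite{Hoang2015} does not graft vertices onto $C_5\vee K_{k-3}$; it defines an explicit infinite sequence of $2K_2$-free graphs by a uniform adjacency rule in which the $5$-chromatic obstruction is genuinely spread over all vertices, and establishes criticality by directly exhibiting the required $4$-colourings of each vertex-deleted subgraph. To repair your proof you would need to supply a comparably explicit family together with those three verifications.
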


This has led to significant interest in classifying $k$-vertex-critical $(P_t,H)$-free graphs for $t\ge 5$. 
There are exactly $13$ $5$-vertex-critical $(P_5,C_5)$-free graphs~\cite{Hoang2015} and only finitely many $6$-vertex-critical $(P_5,\text{banner})$-free graphs where \textit{banner }is the graph obtained from a cycle of order four by attaching a leaf to one of its vertices~\cite{P5banner2019}. There are only finitely many $4$-vertex-critical $(P_8,C_4,C_5)$-free graphs~\cite{ChudnovskyStacho2018} and $5$-vertex-critical $(P_6,\text{banner})$-free graphs~\cite{Huang2019}. 
More generally, there are only finitely many $k$-vertex-critical $(P_5,H)$-free graphs for all $k \geq 1$ if $H=\overline{P_5}$~\cite{Dhaliwal2017} or $H=K_{s,s}$ (the complete bipartite graph of order $2s$) for any $s\ge 1$~\cite{KaminskiPstrucha2019} where the latter extends an analogous result for $(P_6,C_4)$-free graphs~\cite{Hell2017} and the former includes a structural characterization.
In recent work, it was shown for $k \geq 5$ that there are only finitely many $k$-vertex-critical $(P_5,H)$-free graphs for $H$ of order four if and only if $H$ is neither $2P_2$ nor $K_3+P_1$~\cite{Cameron2020}.  Very recently, it was shown that there are only finitely many $k$-vertex-critical $(P_5,H)$-free graphs for all $k$ when $H=\text{gem}$ (where gem$=\overline{P_4+P_1}$) or $H=\overline{P_3+P_2}$~\cite{CaiGoedgebeurHuang2021}. Returning to $H$-free vertex-critical graphs, substantial progress was recently made with the following dichotomy theorem.

\begin{theorem}[\cite{Chud4critical2020}]\label{thm:finite4critical}
Let $H$ be a graph. There are only finitely many $4$-vertex-critical $H$-free  graphs if and only if $H$ is an induced subgraph of $P_6$, $2P_3$, or $P_4+\ell P_1$ for some $\ell \in\mathbb{N}$.
\end{theorem}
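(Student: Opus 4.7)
The plan is to establish the two directions of the biconditional separately. For the ``only if'' direction I would argue by contrapositive: assuming $H$ is not an induced subgraph of $P_6$, nor of $2P_3$, nor of $P_4+\ell P_1$ for any $\ell\in\mathbb{N}$, I would exhibit an infinite family of $4$-vertex-critical $H$-free graphs. The first step is to classify the structural reasons $H$ can fail to embed into each of the three graphs, yielding a short list of minimal substructures $H$ must contain; for each such substructure I would produce a matching infinite family of $4$-vertex-critical graphs avoiding it. Mycielski-type constructions supply triangle-free $4$-chromatic critical graphs of arbitrarily large odd girth to handle cases where $H$ contains a short cycle or small dense subgraph, while sparser $4$-critical constructions (blow-ups of odd cycles, Moser-style spindles, subdivisions) handle cases where $H$ contains a larger induced forest.

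For the ``if'' direction, I would handle the three subcases separately. If $H$ is an induced subgraph of $P_4+\ell P_1$, I would start from Seinsche's theorem that $P_4$-free graphs are perfect, so $K_4$ is the unique $4$-vertex-critical $P_4$-free graph, and then push this to $(P_4+\ell P_1)$-free graphs by a Ramsey-type argument: for any induced $P_4$ in a $4$-vertex-critical $(P_4+\ell P_1)$-free graph $G$, the set of vertices of $G$ with no neighbour in that $P_4$ is independent of size less than $\ell$, which together with a Ramsey bound on the $P_4$-configurations forces $|V(G)|$ to be bounded. If $H$ is an induced subgraph of $2P_3$, I would use that once a single induced $P_3$ is located the remainder of the graph is $P_3$-free (hence a disjoint union of cliques), giving tight structural control and, after bounding the number of cliques by criticality, a finite list. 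If $H$ is an induced subgraph of $P_6$, I would invoke the structural theory of $P_6$-free graphs, decomposing along dominating substructures (such as dominating cliques or dominating induced paths as in the algorithm of Ho\`ang et al.\ for $P_5$-free colourability) and bounding the chromatic number of the resulting pieces.

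The main obstacle is the $P_6$ case of the backward direction: proving finiteness of $4$-vertex-critical $P_6$-free graphs is a substantial structural result in its own right, requiring a deep case analysis of how a $4$-critical graph can be assembled once its longest induced path is bounded and dominating structures are extracted. A secondary difficulty lies in the forward direction, where the casework on ``bad'' $H$ can proliferate and each case requires identifying an infinite $4$-critical family and verifying that every graph in it is $H$-free; keeping this casework organised (perhaps by first reducing to a finite list of minimal obstructions $H$ via a Ramsey-type argument on the structure of $P_4+\ell P_1$, $P_6$, $2P_3$) would be essential to avoid duplication.
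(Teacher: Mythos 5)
This statement is not proved in the paper at all: it is Theorem~\ref{thm:finite4critical}, quoted verbatim from the cited reference \cite{Chud4critical2020} as known background, so there is no in-paper argument to compare yours against. Judged on its own terms, your submission is a plan rather than a proof, and its central step is missing. The entire difficulty of the ``if'' direction is concentrated in the $P_6$ case --- you acknowledge this yourself --- and ``decomposing along dominating substructures and bounding the chromatic number of the pieces'' is not an argument; establishing that there are only finitely many $4$-vertex-critical $P_6$-free graphs (in fact exactly $80$) is the main theorem of the cited work and required a combination of delicate structural analysis and exhaustive computer generation. Nothing in your sketch closes that gap.

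There are also smaller but genuine errors in the parts you do outline. For the $2P_3$ case, it is only the set of vertices \emph{anticomplete} to a fixed induced $P_3$ that induces a $P_3$-free graph (a disjoint union of cliques); the ``remainder of the graph'' in general is not, and the vertices with a neighbour in the $P_3$ still need to be controlled. For the $P_4+\ell P_1$ case, the set of vertices anticomplete to a fixed induced $P_4$ is not an independent set of size less than $\ell$; it merely has independence number less than $\ell$, and you must combine this with $\omega(G)\le 4$ and a Ramsey bound to bound its size (this is exactly the shape of argument the present paper uses for Lemma~\ref{lem:small-ss} and Theorem~\ref{thm:finiteP3ellP1freecrit} in the $P_3+\ell P_1$ setting, so the idea is sound but your statement of it is wrong). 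For the ``only if'' direction, Mycielski's construction gives triangle-free $4$-critical graphs but does not by itself give arbitrarily large odd girth, and the required infinite families for forests $H$ outside the three listed shapes (minimal obstructions such as $3P_2$, $P_2+P_4$, the claw, and $P_7$) each need an explicit verified family; listing construction genres is not a substitute for exhibiting them.
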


It follows that the only open cases of which graphs $H$ there are only finitely many $k$-vertex-critical $H$-free graphs for all $k$ are $H=P_n+\ell P_1$ for $1\le n\le 4$ and $\ell\ge 1$.  For $n=1$ and any $\ell\ge 1$, the finiteness follows trivially by Ramsey's Theorem. For $n=2$ and any $\ell\ge 1$, the second, third, and fourth authors showed finiteness \cite{CameronHoangSawada2020}. For $n=3$ and $\ell=1$, finiteness was shown in \cite{Cameron2020}. In this paper, we further resolve some of the remaining open cases by showing the following:

\begin{itemize}
\item There are only finitely many $k$-vertex-critical $(\forbid)$-free graphs for all $k\ge 1$ and $\ell \ge 0$.
\item There are only finitely many $k$-vertex-critical (gem, co-gem)-free graphs for all $k\ge 1$, and moreover, every such graph must be complete or a clique-expansion of $C_5$.
\end{itemize}
%
%
\subsection{Outline}
We prove our result on $(\forbid)$-free graphs in Section~\ref{sec:P3+ellP1} and our results on (gem, co-gem)-free graphs in Section~\ref{sec:gemcogem}. The precise structural characterization of $k$-vertex-critical (gem, co-gem)-free graphs allows us to exhaustively generate (with aid of computer search) all such graphs for $k\le 16$ which is outlined in Section~\ref{subsec:exhaustivegemcogemgeneration}. We conclude with some open problems in Section~\ref{sec:conclusion}. Before getting to any of that, we first list many of the definitions and notations that will be used throughout the paper.

\subsection{Definitions and notation}\label{sec:definitions}
Let $G$ be a graph and $a,b$ be two vertices of $G$. We write $a \sim b$ to mean $a$ is
adjacent to $b$, and $a \nsim b$ otherwise.   A $k$-coloring $c(G)$ of $G$ is a mapping $c : V(G) \rightarrow \{1,2,\ldots, k\}$ such that $c(x) \not= c(y)$ if $x \sim y$; $c(x)$ is called the {\em color} of $x$. For a set $A$ of vertices of $G$ and a coloring $c(G)$, $c(A)$ denotes the set of colors that appear in $A$. The chromatic number of $G$, denoted by $\chi(G)$ is the smallest $k$ such that $G$ admits a $k$-coloring. The graph $G$ is \textit{$k$-vertex-critical} if $\chi(G)=k$ and $\chi(G-v)<k$ for all $v\in V(G)$. For a set $A \subseteq V(G)$, $G[A]$ denotes the subgraph of $G$ induced by $A$.

$P_n$ denotes the induced path of order $n$. $K_n$ denotes the clique on $k$ vertices.  For graphs $G$ and $H$, $G+H$ denotes the disjoint union of $G$ and $H$. For a positive integer $\ell$, let $\ell G$ denote the disjoint union of $\ell$ copies of $G$. The complement of a graph $G$ is denoted $\overline{G}$. For $S\subseteq V(G)$, we let $G-S$  denote the graph obtained from $G$ by deleting all vertices in $S$ along with their incident edges. For $v\in V(G)$, we let $G-v$ denote $G-\{v\}$; and we let $N(v)$ denote the set of vertices of $G-v$ that are adjacent to $v$. 

The \textit{gem} is the graph $\overline{P_4+P_1}$ and the \textit{co-gem} is $P_4+P_1$. A graph $G$ is \textit{perfect} if $\chi(H)=\omega(H)$ for all induced subgraphs $H$ of $G$. For subsets $A$ and $B$ of $V(G)$, we say $A$ is \textit{complete to $B$} if $ab\in E(G)$ for all $a\in A$ and $b\in B$ and we say $A$ is \textit{anti-complete to $B$} if $ab\not\in E(G)$ for all $a\in A$ and $b\in B$. For $M\subseteq V(G)$, we say $M$ is a \textit{module} if for all $v\in V(G)\setminus M$, $v$ is either complete or anti-complete to $M$. A module $M$ is \textit{non-trivial} if $M\neq V(G)$ and $|M|\neq 1$. Given a graph $G$ of order $n$ with vertices $v_1,v_2,\ldots,v_n$ and any disjoint non-empty graphs $H_1,H_2,\ldots,H_n$, the \textit{expansion} $G(H_1,H_2,\ldots,H_n)$ is the graph obtained from $G$ by replacing, for each $i$, $v_i$ with $H_i$ and joining $x\in H_i$ and $y\in H_j$ with an edge if and only if $v_i$ is adjacent to $v_j$ in $G$. Note that the expansion depends on an order of the vertices. An expansion is called a \textit{$P_4$-free expansion} if each $H_i$ is $P_4$-free and it is called a \textit{clique expansion} if each $H_i$ is a clique. For a given coloring $c$ of a graph $G$ and a subset $S\subseteq V(G)$, let $c(S)$ denote the set of all colors used on vertices in $S$ and $S^j$ denote the set of all vertices in $S$ colored $j$ provided $j$ is a color used in $c$. A \textit{stable set} is a subset $S$ of $V(G)$ such that $S$ induces no edges. We let $\alpha(G)$ denote the largest order of a stable set in $G$. A clique $C\subseteq V(G)$ in a graph $G$ is \textit{maximal} if $C\cup \{v\}$ is not a clique for all $v\in V(G)\setminus C$. A stable set $S$ of a graph $G$ is \textit{very good} is $S\cap K\neq\emptyset$ for every maximal clique $K$ of $G$. 

Let $R(r,s)$, for $r,s \geq 1$,  denote the Ramsey numbers, where $R(r,s)$ is the least positive integer such that every graph with at least $R(r,s)$ vertices contains either a clique on $r$ vertices or an independent set on $s$ vertices. We note that $R(r,s)$ always exists by Ramsey's Theorem~\cite{Ramsey}.

%

\section{($P_3 + \ell P_1$)-free graphs}\label{sec:P3+ellP1}
Throughout this section, assume $G$ is a $k$-vertex-critical ($P_3 + \ell P_1$)-free graph.

Let $S$ be a maximum stable set of $G$. Then, each vertex in $G-S$ has at least one neighbor in $S$. Let $A$ be the vertices in $G-S$ with exactly one neighor in $S$. Let $B = G - (S \cup A)$. We note that every vertex in $B$ has at least two neighbors in $S$. Let $S_A$ be the set of vertices $s \in S$ such that some vertex in $A$ is adjacent to $s$. Let $S_B = S - S_A$.

\begin{lemma}\label{lem:small-ss}
For $k\ge 3$ and $\ell\ge 0$,	$\alpha(G) <(k-1)^2 (\ell +3) $
\end{lemma}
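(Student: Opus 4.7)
The plan is to bound $\alpha(G)=|S|$ by combining the ($\forbid$)-free hypothesis with $(k-1)$-colorings of $G-v$ for well-chosen vertices $v$, handling the contributions from $S_B$ and $S_A$ separately. The key starting observation is that every $b\in B$ satisfies $|S\setminus N(b)|\le\ell-1$: since $b$ has at least two $S$-neighbors $s_1,s_2$, the triple $\{s_1,b,s_2\}$ induces a $P_3$, and $S\setminus N(b)$ is a stable set anti-complete to this $P_3$, so the ($\forbid$)-free condition caps its size. Thus every $b\in B$ is adjacent to all but at most $\ell-1$ vertices of $S$.

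For the $S_B$ contribution I use a coloring argument. Pick $s\in S_B$ (if non-empty); then $N(s)\subseteq B$ by definition, and any $(k-1)$-coloring $c$ of $G-s$ (which exists by $k$-vertex-criticality) must use all $k-1$ colors on $N(s)$, else we could extend to $G$. Consequently for each $s'\in S\setminus\{s\}$ there is some $b\in N(s)\subseteq B$ with $c(b)=c(s')$, whence $b\nsim s'$ and $s'\in S\setminus N(b)$. Summing over the $|N(s)|\le|B|$ choices of $b$, each accounting for at most $\ell-1$ vertices, gives $|S|-1\le|B|(\ell-1)$. A complementary analysis bounds $|B|$: either $B$ contains an induced $P_3$ (giving a direct ($\forbid$)-free bound on $|S|$ via anti-complete subsets of $S$), or $B$ is a disjoint union of cliques; non-adjacent pairs in distinct cliques of $B$ combined with common $S$-neighbors then force $B$ to be a single clique, so $|B|\le\omega(G)\le k$.

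The remaining case $S=S_A$ is treated analogously on the $A$-side. For each $s\in S$ pick $a(s)\in A_s:=A\cap N(s)$; if two such $a(s),a(s')$ are adjacent with $s\ne s'$, the induced $P_3$ $s-a(s)-a(s')$ combined with the fact that each $a(s)$ has unique $S$-neighbor $s$ makes $S\setminus\{s,s'\}$ an anti-complete stable set, yielding $|S|\le\ell+1$. Otherwise each $A_s$ is pairwise anti-complete to $A_{s'}$, and a further $P_3$-argument inside $A_s$ (using $a-s-a'$ for non-adjacent $a,a'\in A_s$) forces each $A_s$ to be a clique; since $\{s\}\cup A_s$ is then a clique, $|A_s|\le\omega(G)-1\le k-1$. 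A $(k-1)$-coloring of $G-a_0$ for $a_0\in A_{s_0}$ followed by a pigeonhole over color classes --- split into colors realized by $N(a_0)\cap B$ (each contributing $\le\ell-1$ vertices of $S$ via $S\setminus N(b)$) and colors realized only by $\{s_0\}\cup A_{s_0}$ (handled by a secondary argument exploiting the block structure of the $A_s$'s) --- yields the second factor of $k-1$ and hence the bound $(k-1)^2(\ell+3)$.

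The main obstacle I foresee is this last sub-case: the elementary ($\forbid$)-free arguments yield only $O(\ell)$ bounds under each structural hypothesis in isolation, so reaching the full quadratic-in-$(k-1)$ bound requires combining the two coloring-based pigeonholes (once over color classes, once over the $A_{s_0}$-block structure) while carefully controlling the color classes realized only by $\{s_0\}\cup A_{s_0}$. I expect the additive $+3$ in the target bound to absorb the small number of ``seed'' vertices $s_0,s_1,s_2,a_0$ used to build the various $P_3$'s in the case analysis.
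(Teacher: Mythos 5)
There are two genuine gaps. The first is in your treatment of $S_B$: after correctly establishing that every $b\in B$ has at most $\ell-1$ non-neighbors in $S$, you then try to bound $|B|$ by building copies of $\forbid$ whose $P_3$ contains vertices of $B$ and whose $\ell P_1$ is drawn from $S$. This cannot work, for exactly the reason you just proved: any vertex of $B$ appearing in the $P_3$ forces the $\ell$ isolated vertices to lie among its at most $\ell-1$ non-neighbors in $S$. So an induced $P_3$ inside $B$ yields no bound on $|S|$, non-adjacent vertices in distinct ``cliques'' of $B$ with a common $S$-neighbor yield no contradiction, $B$ need not be a single clique (nor even $P_3$-free), and your inequality $|S|-1\le |B|(\ell-1)$ is left with an unbounded right-hand side. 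The paper avoids bounding $|B|$ altogether: it picks $x\in S$ minimizing $|N(x)\cap A|$ (so $x\in S_B$ whenever $S_B\neq\emptyset$), fixes a $(k-1)$-coloring of $G-x$ in which all $k-1$ colors meet $N(x)\subseteq B$, and notes that a counting argument forces some color class to contain at least $(k-1)(\ell+3)$ vertices of $S$ --- a ``big'' class --- while no color appearing on a vertex of $B$ can be big, since that vertex would then have at least $\ell$ non-neighbors in $S$. This disposes of the case $S_B\neq\emptyset$ outright.

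The second gap is the one you flag yourself: the case $S=S_A$ is reduced to an unspecified ``secondary argument exploiting the block structure of the $A_s$'s,'' and that is precisely where the real work lies. The missing device in your sketch is that the $\ell P_1$ part of the final forbidden subgraph must be taken from $A$, not from $S$. Concretely, with $x$ chosen to minimize $t=|x_A|$ and colors $t+1,\dots,k-1$ forced onto $N(x)\cap B$, every $s$ in a big color class satisfies $|s_A|\ge t$, so the clique $s_A$ (which avoids the color of $s$) must contain a vertex $f(s)$ carrying one of the at most $k-2$ ``$B$-colors''; the pigeonhole then produces $\ell$ vertices $I\subseteq A$ of a single such color $i$. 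These are pairwise non-adjacent (same color), lie in pairwise anti-complete sets $s_A$, and are non-adjacent to the vertex $b_i\in N(x)\cap B$ of color $i$, so $\{b_i,s_1,s_2\}\cup I$, with $s_1,s_2$ neighbors of $b_i$ in $S-N(I)$, induces a $\forbid$. Without this construction neither of your two main cases closes, so the proposal as written does not establish the bound.
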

\begin{proof}
By way of contradiction, assume $k\ge 3$ and $\ell\ge 0$ but $|S| \geq (k-1)^2 (\ell + 3)$. For a vertex $s \in S_A$, let $s_A$ be the neighbors of $s$ in $A$, i.e., $s_A=N(s)\cap A$. 

We will establish a number of claims before proving the Lemma.
\begin{claim}\label{cla:s_a}
For each $s \in S_A$, $s_A$ induces a clique in $G$. 
\end{claim}

\begin{proof}[Proof of Claim~\ref{cla:s_a}]
Let $s\in S_A$ and suppose $a_1,a_2\in s_A$ such that $a_1\nsim a_2$.  By definition, $a_1$ and $a_2$ are not adjacent to any vertex in $S-\{s\}$, so the set containing $a_1,a_2$, $s$ and any $\ell$ vertices in $S-\{s\}$ induces a $\forbid$, a contradiction.
\end{proof}

\begin{claim}\label{cla:no-edge} 
For any two vertices $x,y \in S_A$, $x_A$ is anti-complete to $y_A$.
\end{claim}
\begin{proof}[Proof of Claim~\ref{cla:no-edge}]
Suppose $x,y\in S_A$, $a_1\in x_A$ and $a_2\in y_A$ such that $a_1\sim a_2$. Then the set containing $a_1,a_2,x$  and any $\ell$ vertices in $S-\{x,y\}$ induces a $\forbid$, a contradiction.
\end{proof}

\begin{claim}\label{cla:Bnonempty}
$B \neq\emptyset$. 
\end{claim}
\begin{proof}[Proof of Claim~\ref{cla:Bnonempty}]
Suppose $B=\emptyset$. Therefore, $S_A=S$ and, by assumption, $|S_A|\ge (k-1)^2(l+3)$. Let $s\in S_A$. By Claim~\ref{cla:s_a}, $s_A$ induces a clique. Further, since $G$ is $k$-vertex-critical, $\deg(s)\ge k-1$, so $N[s]$ induces a clique of order at least $k$. Since $G$ is $k$-vertex-critical, it follows that $G=K_k$ and therefore $\alpha(G)=1$, a contradiction.
\end{proof}

\begin{claim}\label{cla:Batmostellnonneighbors}
Each vertex in $B$ has at most $\ell -1$ non-neighbors in $S$.
\end{claim}
\begin{proof}[Proof of Claim~\ref{cla:Batmostellnonneighbors}]
If there is a vertex in $B$ with at least $\ell$ non-neighbors in $S$, then the set containing this vertex together with any two of its neighbors in $S$ and any $\ell$ of its non-neighbors in $S$ induces a $\forbid$, a contradiction.
\end{proof}

\noindent  We say that a color class $i$ is \textit{big} if it has at least $(k-1) (\ell+3)$ vertices in $S$, otherwise we say that the color class is \textit{small}. 

\begin{claim}\label{cla:somecolorclassisbig}
Fix a $k$-coloring of $G$. If some color class has exactly one vertex, then some other color class is big.
\end{claim}
\begin{proof}[Proof of Claim~\ref{cla:somecolorclassisbig}]
If some color class has one vertex and each of the other $k-1$ color classes contain at most $(k-1) (\ell+3)-1$ vertices from $S$, then  
\begin{align*}
|S|&\leq (k-1)((k-1)(\ell+3)-1)+1\\
&=(k-1)^2(\ell +3)-k+2\\
&<(k-1)^2(\ell +3),
\end{align*}
a contradiction. Therefore, some color class has to be big. 
\end{proof} 

\begin{claim}\label{cla:small-color-class}
If a color appears in $B$, then its color class is small.
\end{claim}
\begin{proof}[Proof of Claim~\ref{cla:small-color-class}]
If a big color class contains $b\in B$, then $b$ has at least $(k-1) (\ell+3)>\ell-1$ non-neighbors in $S$, contradicting Claim~\ref{cla:Batmostellnonneighbors}.
\end{proof}

\noindent Now let $x$ be a vertex in $S$ with the least number of neighbors in $A$ among all vertices in $S$, that is, $|x_A|=\min_{s\in S}(|s_ A|)$. In particular, if $S_B \not= \emptyset$, then $x \in S_B$. Since $G$ is $k$-vertex-critical, we may fix a $(k-1)$-coloring of $G-x$ with colors from $\{1,2,\ldots, k-1\}$ such that all $k-1$ colors must appear in $N(x)$, for otherwise $G$ is $(k-1)$-colorable.  From Claim~\ref{cla:somecolorclassisbig}, some color class had to be big.

If $x\in S_B$, then all $k-1$ colors including one in a big color class appear in $B$, contradicting Claim~\ref{cla:small-color-class}. Therefore, $x \in S_A$ and $S_B=\emptyset$. Since $x_A$ is a clique, it has $|x_A|$ distinct colors. We may assume colors $1, \dots, t$ appear in $x_A$, and colors $t+1, \ldots, k -1$ appear in $N(x) \cap B$. If $t = k-1$, then $G$ contains a clique with $k$ vertices. Since $G$ is $k$-vertex-critical, $G$ is a clique on $k$ vertices. Thus we have $\alpha(G)=1$, a contradiction. Therefore, we may assume  $t<k-1$. The color classes of $t+1, \ldots, k -1$ must all be small by Claim~\ref{cla:small-color-class}.

Without loss of generality, assume color $1$ is big. Let $S_1$ be the set of vertices in $S_A=S$ with color $1$. For each vertex $s \in S_1$, the clique $s_A$ has at least as many vertices as $x_A$ by the choice of $x$. Thus, for each $s\in S_1$ there is a vertex $f(s) \in s_A$ with a color in the set ${\cal C} = \{ t+1, \ldots k -1\}$. Let $F$ be the set of all such vertices $f(s)$ for all $s\in S_1$. \red{By definition of $f(s)$, we have $F \subset A$.}
Since  $|F|=|S_1|\geq (k-1) (\ell+3)$ and $1\le |\mathcal{C}|\le k-2$, there are at least
$$\left\lceil\frac{(k-1)(\ell+3)}{k-2}\right\rceil>\ell
$$
vertices in $F$ that have the same color by the Pigeonhole Principle. Let $i$ be a color used on at least $\ell$ vertices in $F$  and let $I$ be a set of exactly $\ell$ vertices colored $i$ in $F$. By definition of $F$, $i\in {\cal C}$. Let $b_i$ be a vertex in $N(x) \cap B$ with color $i$. Recall that $b_i$ exists since colors $1,\ldots, t$ must appear in $x_A$ and colors $t+1,\ldots, k-1$ must appear in $N(x)\cap B$. \red{Since $I \subset A$, we have $b_i \not\in I$.}
\begin{claim}\label{cla:bitwoneighbors}
$b_i$ has at least two neighbors in $S-N(I)$.
\end{claim}
\begin{proof}[Proof of Claim~\ref{cla:bitwoneighbors}]
\red{By  Claim~\ref{cla:no-edge}, a vertex in $S$ cannot be adjacent to two vertices in $I$. It follows from the definition of $A$ that $|N(I)\cap S|=|I|=\ell$}. Therefore, $$|S-N(I)|\ge (k-1)^2(\ell+3)-\ell>\ell+1.$$ Thus, by Claim~\ref{cla:Batmostellnonneighbors}, $b_i$ has at least two neighbors in $S-N(I)$. 
\end{proof}
We now continue with the proof of the Lemma. 

\noindent Let $s_1,s_2\in S-N(I)$ such that $b\sim s_1$ and $b\sim s_2$.  Therefore, $\{b_i,s_1,s_2\}\cup I$ induces a $\forbid$, a contradiction. 
\end{proof}

\begin{theorem}\label{thm:finiteP3ellP1freecrit}
There are only finitely many $k$-vertex-critical $(\forbid)$-free graphs for all $k\ge 1$ and $\ell \ge 0$.
\end{theorem}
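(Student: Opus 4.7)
The plan is to derive Theorem~\ref{thm:finiteP3ellP1freecrit} as an immediate corollary of Lemma~\ref{lem:small-ss} together with Ramsey's Theorem. Lemma~\ref{lem:small-ss} already gives us the nontrivial half of what we need, namely a bound on the independence number. Since the number of non-isomorphic graphs of bounded order is finite, it suffices to prove an absolute upper bound (depending only on $k$ and $\ell$) on $|V(G)|$.

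First I would observe that any $k$-vertex-critical graph $G$ satisfies $\omega(G)\le k$, since $\omega(G)\le\chi(G)=k$. Combined with Lemma~\ref{lem:small-ss}, which gives $\alpha(G)<(k-1)^2(\ell+3)$, both the clique number and the independence number of $G$ are bounded by quantities depending only on $k$ and $\ell$. By the definition of the Ramsey number it follows that
\[
|V(G)| < R\bigl(k+1,\,(k-1)^2(\ell+3)\bigr),
\]
since otherwise $G$ would contain either a clique of size $k+1$ (impossible) or an independent set of size $(k-1)^2(\ell+3)$ (contradicting Lemma~\ref{lem:small-ss}). Hence every $k$-vertex-critical $(P_3+\ell P_1)$-free graph has order strictly less than this Ramsey number.

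Finally, I would note that there are only finitely many graphs (up to isomorphism) of any fixed bounded order, which yields the desired finiteness. The cases $k\le 2$ are trivial (the only $1$-vertex-critical graph is $K_1$ and the only $2$-vertex-critical graph is $K_2$), so the argument via Lemma~\ref{lem:small-ss} only needs to be invoked for $k\ge 3$, matching the hypothesis of the lemma. There is no real obstacle here: all the work has been done in Lemma~\ref{lem:small-ss}, and the theorem is a two-line Ramsey-type consequence.
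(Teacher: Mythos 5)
Your proposal is correct and follows essentially the same route as the paper: bound $\omega(G)$ by $k$, invoke Lemma~\ref{lem:small-ss} for $\alpha(G)$, and apply Ramsey's Theorem to bound $|V(G)|$. The only (cosmetic) difference is that you use $R\bigl(k+1,(k-1)^2(\ell+3)\bigr)$, which matches the hypothesis $\omega(G)\le k$ cleanly, whereas the paper writes $R\bigl(k,(k-1)^2(\ell+3)\bigr)$, implicitly using that a $k$-vertex-critical graph other than $K_k$ has $\omega(G)\le k-1$; both bounds suffice.
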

\begin{proof}
For $k=1,2$, the result is trivial. Fix $k>2$ and $\ell\ge 0$ and let $G$ be a $k$-vertex-critical $(\forbid)$-free graph. Since $G$ is $k$-colorable, $\omega(G)<k+1$. Therefore, from Lemma~\ref{lem:small-ss}, $|V(G)|<R(k,(k-1)^2 (\ell +3))$. Hence, by Ramsey's Theorem, every $k$-vertex-critical $(\forbid)$-free graph has order bounded by some constant depending only on $k$ and $\ell$. Therefore, there are only finitely many $k$-vertex-critical $(\forbid)$-free graphs.
\end{proof}

\section{(gem, co-gem)-free graphs}\label{sec:gemcogem}
From Theorem~\ref{thm:finiteP3ellP1freecrit}, the only graphs $H$ where it is unknown if there are only a finite number of $k$-vertex-critical $H$-free graphs for all $k$ is $H=P_4+\ell P_1$ for all $\ell\ge 1$. In this section we will consider a subclass of $(P_4+P_1)$-free graphs and show that this subclass contains only a finite number of $k$-vertex-critical graphs for all $k$. Our results in this section make extensive use of a structural characterization due to Karthick and Maffray \cite{KarthickMaffraygemcogem} whose statement requires a few definitions.

Let $G_1,G_2,\ldots,G_{10}$ be defined as shown in Figure~\ref{fig:Gis} and let $\mathcal{G}_i$ and $\mathcal{G}^{\ast}_i$ denote the set of all $P_4$-free and clique expansions of $G_i$, respectively. Throughout, for a graph $G\in \mathcal{G}_i$, we will use $A_i$ to denote the set of vertices used to replace $v_i$ in the graph $G_i$ according to the labellings in Figure~\ref{fig:Gis}.

\setcounter{subfigure}{0}
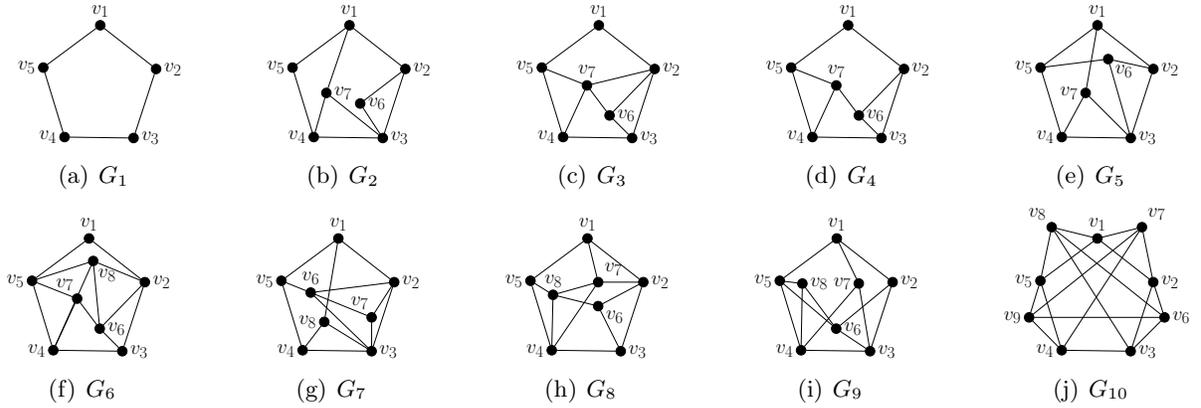
\begin{figure}[htb]
\def\c{0.3}
\def\r{1}
\centering
\subfigure[$G_1$]{
\scalebox{\c}{
\begin{tikzpicture}
\GraphInit[vstyle=Classic]
\Vertex[Lpos=180,L=\hbox{\Huge $v_5$},x=0.0cm,y=3.1235cm]{v0}
\Vertex[L=\hbox{\Huge $v_2$},x=5.0cm,y=3.0618cm]{v1}
\Vertex[Lpos=180,L=\hbox{\Huge $v_4$},x=0.942cm,y=0.0422cm]{v2}
\Vertex[Lpos=90,L=\hbox{\Huge $v_1$},x=2.5253cm,y=5.0cm]{v3}
\Vertex[L=\hbox{\Huge $v_3$},x=4.0006cm,y=0.0cm]{v4}

\Edge[](v0)(v2)
\Edge[](v0)(v3)
\Edge[](v1)(v3)
\Edge[](v1)(v4)
\Edge[](v2)(v4)
\end{tikzpicture}}}
\qquad
\subfigure[$G_2$]{
\scalebox{\c}{
\begin{tikzpicture}
\GraphInit[vstyle=Classic]
\Vertex[Lpos=180,L=\hbox{\Huge $v_5$},x=0.0cm,y=3.1235cm]{v4}
\Vertex[L=\hbox{\Huge $v_2$},x=5.0cm,y=3.0618cm]{v1}
\Vertex[Lpos=180,L=\hbox{\Huge $v_4$},x=0.942cm,y=0.0422cm]{v3}
\Vertex[Lpos=90,L=\hbox{\Huge $v_1$},x=2.5253cm,y=5.0cm]{v0}
\Vertex[L=\hbox{\Huge $v_3$},x=4.0006cm,y=0.0cm]{v2}
\Vertex[L=\hbox{\Huge $v_6$},x=3cm,y=1.53cm]{v5}
\Vertex[L=\hbox{\Huge $v_7$},x=1.5cm,y=2cm]{v6}
\Edge[](v0)(v1)
\Edge[](v0)(v4)
\Edge[](v0)(v6)
\Edge[](v1)(v2)
\Edge[](v1)(v5)
\Edge[](v2)(v3)
\Edge[](v2)(v5)
\Edge[](v2)(v6)
\Edge[](v3)(v4)
\Edge[](v3)(v6)
\end{tikzpicture}}}
\qquad
\subfigure[$G_3$]{
\scalebox{\c}{
\begin{tikzpicture}
\GraphInit[vstyle=Classic]
\Vertex[Lpos=180,L=\hbox{\Huge $v_5$},x=0.0cm,y=3.1235cm]{v4}
\Vertex[L=\hbox{\Huge $v_2$},x=5.0cm,y=3.0618cm]{v1}
\Vertex[Lpos=180,L=\hbox{\Huge $v_4$},x=0.942cm,y=0.0422cm]{v3}
\Vertex[Lpos=90,L=\hbox{\Huge $v_1$},x=2.5253cm,y=5.0cm]{v0}
\Vertex[L=\hbox{\Huge $v_3$},x=4.0006cm,y=0.0cm]{v2}
\Vertex[L=\hbox{\Huge $v_6$},x=3cm,y=1cm]{v5}
\Vertex[Lpos=90,L=\hbox{\Huge $v_7$},x=2cm,y=2.3317cm]{v6}
\Edge[](v0)(v1)
\Edge[](v0)(v4)
\Edge[](v1)(v2)
\Edge[](v1)(v5)
\Edge[](v1)(v6)
\Edge[](v2)(v3)
\Edge[](v2)(v5)
\Edge[](v3)(v4)
\Edge[](v3)(v6)
\Edge[](v4)(v6)
\Edge[](v5)(v6)
\end{tikzpicture}}}
\qquad
\subfigure[$G_4$]{
\scalebox{\c}{
\begin{tikzpicture}
\GraphInit[vstyle=Classic]
\Vertex[Lpos=180,L=\hbox{\Huge $v_5$},x=0.0cm,y=3.1235cm]{v4}
\Vertex[L=\hbox{\Huge $v_2$},x=5.0cm,y=3.0618cm]{v1}
\Vertex[Lpos=180,L=\hbox{\Huge $v_4$},x=0.942cm,y=0.0422cm]{v3}
\Vertex[Lpos=90,L=\hbox{\Huge $v_1$},x=2.5253cm,y=5.0cm]{v0}
\Vertex[L=\hbox{\Huge $v_3$},x=4.0006cm,y=0.0cm]{v2}
\Vertex[L=\hbox{\Huge $v_6$},x=3cm,y=1cm]{v5}
\Vertex[Lpos=90,L=\hbox{\Huge $v_7$},x=2cm,y=2.3317cm]{v6}
\Edge[](v0)(v1)
\Edge[](v0)(v4)
\Edge[](v1)(v2)
\Edge[](v1)(v5)
\Edge[](v2)(v3)
\Edge[](v2)(v5)
\Edge[](v3)(v4)
\Edge[](v3)(v6)
\Edge[](v4)(v6)
\Edge[](v5)(v6)
\end{tikzpicture}}}
\qquad
\subfigure[$G_5$]{
\scalebox{\c}{
\begin{tikzpicture}
\GraphInit[vstyle=Classic]
\Vertex[Lpos=180,L=\hbox{\Huge $v_5$},x=0.0cm,y=3.1235cm]{v4}
\Vertex[L=\hbox{\Huge $v_2$},x=5.0cm,y=3.0618cm]{v1}
\Vertex[Lpos=180,L=\hbox{\Huge $v_4$},x=0.942cm,y=0.0422cm]{v3}
\Vertex[Lpos=90,L=\hbox{\Huge $v_1$},x=2.5253cm,y=5.0cm]{v0}
\Vertex[L=\hbox{\Huge $v_3$},x=4.0006cm,y=0.0cm]{v2}
\Vertex[Lpos=-35,L=\hbox{\Huge $v_6$},x=3cm,y=3.5cm]{v5}
\Vertex[Lpos=180,L=\hbox{\Huge $v_7$},x=2cm,y=2cm]{v6}
\Edge[](v0)(v1)
\Edge[](v0)(v4)
\Edge[](v0)(v6)
\Edge[](v1)(v2)
\Edge[](v1)(v5)
\Edge[](v2)(v3)
\Edge[](v2)(v5)
\Edge[](v2)(v6)
\Edge[](v3)(v4)
\Edge[](v3)(v6)
\Edge[](v4)(v5)
\end{tikzpicture}}}
\qquad
\subfigure[$G_6$]{
\scalebox{\c}{
\begin{tikzpicture}
\GraphInit[vstyle=Classic]
\Vertex[Lpos=180,L=\Vertex[Lpos=180,L=\hbox{\Huge $v_5$},x=0.0cm,y=3.1235cm]{v4}
\Vertex[L=\hbox{\Huge $v_2$},x=5.0cm,y=3.0618cm]{v1}
\Vertex[Lpos=180,L=\hbox{\Huge $v_4$},x=0.942cm,y=0.0422cm]{v3}
\Vertex[Lpos=90,L=\hbox{\Huge $v_1$},x=2.5253cm,y=5.0cm]{v0}
\Vertex[L=\hbox{\Huge $v_3$},x=4.0006cm,y=0.0cm]{v2}
\Vertex[L=\hbox{\Huge $v_6$},x=3cm,y=1cm]{v5}
\Vertex[Lpos=100,L=\hbox{\Huge $v_7$},x=2cm,y=2.3317cm]{v6}
\Vertex[Lpos=-65,L=\hbox{\Huge $v_8$},x=2.7107cm,y=4cm]{v7}
\Edge[](v0)(v1)
\Edge[](v0)(v4)
\Edge[](v1)(v2)
\Edge[](v1)(v5)
\Edge[](v1)(v7)
\Edge[](v2)(v3)
\Edge[](v2)(v5)
\Edge[](v3)(v4)
\Edge[](v3)(v6)
\Edge[](v3)(v7)
\Edge[](v4)(v6)
\Edge[](v4)(v7)
\Edge[](v5)(v6)
\Edge[](v5)(v7)
\end{tikzpicture}}}
\qquad
\subfigure[$G_7$]{
\scalebox{\c}{
\begin{tikzpicture}
\GraphInit[vstyle=Classic]
\Vertex[Lpos=180,L=\hbox{\Huge $v_5$},x=0.0cm,y=3.1235cm]{v4}
\Vertex[L=\hbox{\Huge $v_2$},x=5.0cm,y=3.0618cm]{v1}
\Vertex[Lpos=180,L=\hbox{\Huge $v_4$},x=0.942cm,y=0.0422cm]{v3}
\Vertex[Lpos=90,L=\hbox{\Huge $v_1$},x=2.5253cm,y=5.0cm]{v0}
\Vertex[L=\hbox{\Huge $v_3$},x=4.0006cm,y=0.0cm]{v2}
\Vertex[Lpos=90,L=\hbox{\Huge $v_6$},x=1.3cm,y=2.6cm]{v5}
\Vertex[Lpos=95,L=\hbox{\Huge $v_7$},x=4cm,y=1.5cm]{v6}
\Vertex[Lpos=180,L=\hbox{\Huge $v_8$},x=1.9cm,y=1.3cm]{v7}
\Edge[](v0)(v1)
\Edge[](v0)(v4)
\Edge[](v0)(v7)
\Edge[](v1)(v2)
\Edge[](v1)(v5)
\Edge[](v1)(v6)
\Edge[](v2)(v3)
\Edge[](v2)(v5)
\Edge[](v2)(v6)
\Edge[](v2)(v7)
\Edge[](v3)(v4)
\Edge[](v3)(v7)
\Edge[](v4)(v5)
\Edge[](v5)(v6)
\end{tikzpicture}}}
\qquad
\subfigure[$G_8$]{
\scalebox{\c}{
\begin{tikzpicture}
\GraphInit[vstyle=Classic]
\Vertex[Lpos=180,L=\hbox{\Huge $v_5$},x=0.0cm,y=3.1235cm]{v4}
\Vertex[L=\hbox{\Huge $v_2$},x=5.0cm,y=3.0618cm]{v1}
\Vertex[Lpos=180,L=\hbox{\Huge $v_4$},x=0.942cm,y=0.0422cm]{v3}
\Vertex[Lpos=90,L=\hbox{\Huge $v_1$},x=2.5253cm,y=5.0cm]{v0}
\Vertex[L=\hbox{\Huge $v_3$},x=4.0006cm,y=0.0cm]{v2}
\Vertex[Lpos=-15,L=\hbox{\Huge $v_6$},x=3cm,y=2cm]{v5}
\Vertex[Lpos=45,L=\hbox{\Huge $v_7$},x=3cm,y=3.0618cm]{v6}
\Vertex[Lpos=88,L=\hbox{\Huge $v_8$},x=1cm,y=2.5cm]{v7}
\Edge[](v0)(v1)
\Edge[](v0)(v4)
\Edge[](v0)(v6)
\Edge[](v1)(v2)
\Edge[](v1)(v5)
\Edge[](v1)(v6)
\Edge[](v2)(v3)
\Edge[](v2)(v5)
\Edge[](v3)(v4)
\Edge[](v3)(v6)
\Edge[](v3)(v7)
\Edge[](v4)(v7)
\Edge[](v5)(v7)
\Edge[](v6)(v7)
\end{tikzpicture}}}
\qquad
\subfigure[$G_9$]{
\scalebox{\c}{
\begin{tikzpicture}
\GraphInit[vstyle=Classic]
\Vertex[Lpos=180,L=\hbox{\Huge $v_5$},x=0.0cm,y=3.1235cm]{v4}
\Vertex[L=\hbox{\Huge $v_2$},x=5.0cm,y=3.0618cm]{v1}
\Vertex[Lpos=180,L=\hbox{\Huge $v_4$},x=0.942cm,y=0.0422cm]{v3}
\Vertex[Lpos=90,L=\hbox{\Huge $v_1$},x=2.5253cm,y=5.0cm]{v0}
\Vertex[L=\hbox{\Huge $v_3$},x=4.0006cm,y=0.0cm]{v2}
\Vertex[L=\hbox{\Huge $v_6$},x=2.5cm,y=1cm]{v5}
\Vertex[Lpos=180,L=\hbox{\Huge $v_7$},x=3.5cm,y=3.0023cm]{v6}
\Vertex[L=\hbox{\Huge $v_8$},x=1cm,y=3cm]{v7}
\Edge[](v0)(v1)
\Edge[](v0)(v4)
\Edge[](v0)(v6)
\Edge[](v1)(v2)
\Edge[](v1)(v5)
\Edge[](v2)(v3)
\Edge[](v2)(v5)
\Edge[](v2)(v6)
\Edge[](v3)(v4)
\Edge[](v3)(v6)
\Edge[](v3)(v7)
\Edge[](v4)(v5)
\Edge[](v4)(v7)
\Edge[](v5)(v7)
\end{tikzpicture}}}
\qquad
\subfigure[$G_{10}$]{
\scalebox{\c}{
\begin{tikzpicture}
\GraphInit[vstyle=Classic]
\Vertex[Lpos=180,L=\hbox{\Huge $v_5$},x=0.0cm,y=3.1235cm]{v4}
\Vertex[L=\hbox{\Huge $v_2$},x=5.0cm,y=3.0618cm]{v1}
\Vertex[Lpos=180,L=\hbox{\Huge $v_4$},x=0.942cm,y=0.0422cm]{v3}
\Vertex[Lpos=90,L=\hbox{\Huge $v_1$},x=2.5253cm,y=5.0cm]{v0}
\Vertex[L=\hbox{\Huge $v_3$},x=4.0006cm,y=0.0cm]{v2}
\Vertex[,L=\hbox{\Huge $v_6$},x=5.5cm,y=1.5cm]{v5}
\Vertex[Lpos=55,L=\hbox{\Huge $v_7$},x=4.5cm,y=5.5cm]{v6}
\Vertex[Lpos=125,L=\hbox{\Huge $v_8$},x=0.5cm,y=5.5cm]{v7}
\Vertex[Lpos=180,L=\hbox{\Huge $v_9$},x=-0.5cm,y=1.5cm]{v8}
\Edge[](v0)(v1)
\Edge[](v0)(v4)
\Edge[](v0)(v6)
\Edge[](v0)(v7)
\Edge[](v1)(v2)
\Edge[](v1)(v5)
\Edge[](v1)(v6)
\Edge[](v2)(v3)
\Edge[](v2)(v5)
\Edge[](v2)(v7)
\Edge[](v3)(v4)
\Edge[](v3)(v6)
\Edge[](v3)(v8)
\Edge[](v4)(v7)
\Edge[](v4)(v8)
\Edge[](v5)(v7)
\Edge[](v5)(v8)
\Edge[](v6)(v8)
\end{tikzpicture}}}
\caption{Special (gem, co-gem)-free graphs used in the structural characterization in \cite{KarthickMaffraygemcogem}.}\label{fig:Gis}
\end{figure}

Let $\calH$ be the class of (gem, co-gem)-free graphs $G$ such that $V(G)$ can be partitioned into six non-empty sets, $A_1,\ldots,A_6$ such that:
\begin{itemize}
\item $A_1$ is complete to $A_2\cup A_5$ and anti-complete to $A_3\cup A_4\cup A_6$,
\item $A_2$ is complete to $A_1\cup A_3\cup A_6$ and anti-complete to $A_4\cup A_5$,
\item $A_3$ is complete to $A_2\cup A_6\cup A_4$ and anti-complete to $A_1\cup A_5$, and
\item $A_4$ is complete to $A_3\cup A_5$ and anti-complete to $A_1\cup A_2\cup A_6$.
\end{itemize}
\noindent Note that $A_i$ is $P_4$-free for all $i=1,2,\ldots, 6$ and the adjacencies between $A_5$ and $A_6$ are unspecified but restricted by the fact that $G$ is (gem, co-gem)-free. A depiction of a graph in $\calH$ is given in Figure~\ref{fig:H}. We can now state the structural characterization.

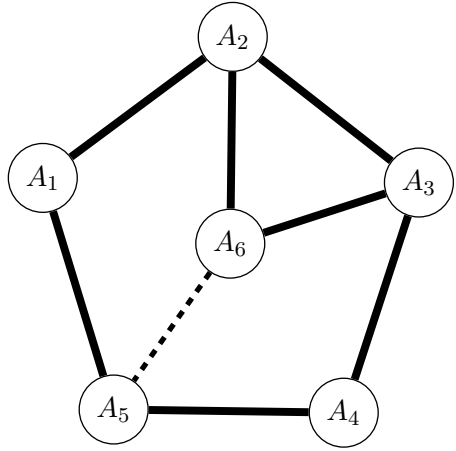
\begin{figure}
\centering
\begin{tikzpicture}
\GraphInit[vstyle=Normal]
\Vertex[L=\hbox{$A_1$},x=0.0cm,y=3.1235cm]{A1}
\Vertex[L=\hbox{$A_3$},x=5.0cm,y=3.0618cm]{A3}
\Vertex[L=\hbox{$A_5$},x=0.942cm,y=0.0422cm]{A5}
\Vertex[L=\hbox{$A_2$},x=2.5253cm,y=5.0cm]{A2}
\Vertex[L=\hbox{$A_4$},x=4.0006cm,y=0.0cm]{A4}
\Vertex[L=\hbox{$A_6$},x=2.4923cm,y=2.2487cm]{A6}
\SetUpEdge[lw=3pt]

\Edge[](A1)(A2)
\Edge[](A1)(A5)
\Edge[](A2)(A3)
\Edge[](A2)(A6)
\Edge[](A3)(A4)
\Edge[](A3)(A6)
\Edge[](A4)(A5)
\SetUpEdge[lw=2pt]
\draw [line width = 2pt, dashed] (A6)--(A5);
%
\end{tikzpicture}
\caption{General form of a graph in $\calH$ where the thick black lines denote sets which are complete to each other and the dashed line denotes unspecified adjacency.}\label{fig:H}
\end{figure}

\begin{theorem}[\cite{KarthickMaffraygemcogem}]\label{thm:gemcogemstructure}
If $G$ is (gem,co-gem)-free, then either $G$ is perfect, or $G\in\mathcal{G}_i$ for some $i\in\{1,2,\ldots, 10\}$, or $G\in \calH$.
\end{theorem}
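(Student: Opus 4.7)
The plan is to combine the Strong Perfect Graph Theorem with a neighborhood analysis rooted at an induced $C_5$. Suppose $G$ is (gem, co-gem)-free and not perfect. By the Strong Perfect Graph Theorem, $G$ contains an induced odd hole or odd antihole of length at least $5$. I would first show that this forces an induced $C_5$: for any induced $C_{2k+1}$ with $k \ge 3$, four consecutive vertices form a $P_4$ and any vertex sufficiently far from them on the cycle is non-adjacent to all four (using that the cycle has length $\ge 7$), yielding a co-gem. Dually, any odd antihole of length $\ge 7$ contains a gem. Since $C_5$ is self-complementary, an induced $C_5$ is unavoidable in any imperfect (gem, co-gem)-free graph.

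Second, fix such a $C_5$ with vertices $v_1 v_2 v_3 v_4 v_5$ (cyclically indexed) and classify each remaining vertex $w$ by its trace $N(w) \cap \{v_1,\ldots,v_5\}$. Up to the dihedral symmetry of the $C_5$, only a small number of traces occur, and most can be eliminated because they create a gem or co-gem together with suitable vertices of the cycle. After this pruning, the surviving traces partition $V(G)$ into at most six classes, which I would identify with the sets $A_1, \ldots, A_6$ in the definition of $\calH$: four classes consisting of vertices whose trace matches the $C_5$-neighborhood of some $v_i$, one class containing the original $v_i$'s, and a distinguished central class whose trace is a single edge of the $C_5$.

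Third, I would show each $A_i$ is $P_4$-free by observing that any induced $P_4$ inside some $A_i$, together with a carefully chosen vertex from another $A_j$ or from the $C_5$ itself, would complete a gem or co-gem. The cross-adjacencies between distinct $A_i$'s are then forced by similar checks, producing exactly the complete/anti-complete pattern described in $\calH$ and leaving only the $A_5$-to-$A_6$ adjacency genuinely unconstrained. If all six $A_i$'s are non-empty and combine in this way, we conclude $G \in \calH$.

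The remaining case, in which one or more of the six classes is empty or an otherwise-forbidden trace survives because few vertices are present to generate a gem or co-gem, is where the sporadic graphs $G_1,\ldots,G_{10}$ arise. For each such proper configuration, a bounded case analysis shows that $G$ must be a $P_4$-free expansion of one of the specific small graphs shown in Figure~\ref{fig:Gis}. The main obstacle will be the bookkeeping in this enumeration: each individual case is an elementary (gem, co-gem) check, but collectively they constitute most of the work, and careful treatment is needed for the unspecified $A_5$-$A_6$ adjacency and for small configurations in which the boundary between ``belongs to $\calH$'' and ``is an expansion of some $G_i$'' is subtle.
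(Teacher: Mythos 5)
First, a point of order: the paper does not prove this statement --- it is imported verbatim from Karthick and Maffray \cite{KarthickMaffraygemcogem}, so there is no in-paper proof to compare against; your attempt has to be judged on its own. Your first step is correct and standard: by the Strong Perfect Graph Theorem an imperfect graph contains an odd hole or odd antihole, every odd hole of length at least $7$ contains an induced $P_4+P_1$, every odd antihole of length at least $7$ contains a gem, and $C_5$ is self-complementary, so an induced $C_5$ is forced.

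The gap is in your second step. Classifying the remaining vertices by their trace on a fixed $C_5=v_1v_2v_3v_4v_5$ and discarding the traces that create a gem or co-gem \emph{inside} $\{w,v_1,\dots,v_5\}$ does not leave ``at most six classes.'' Traces of size $0$, $1$, $4$ and $5$ are indeed excluded, but four families survive the local check: a non-edge of the cycle, an edge of the cycle, three \emph{consecutive} cycle vertices such as $\{v_5,v_1,v_2\}$, and an edge together with the opposite vertex such as $\{v_1,v_3,v_4\}$ --- twenty trace types in all. Only the first two (plus edge-with-opposite-vertex in the $A_6$ role) are compatible with membership in the six sets of $\calH$ relative to that $C_5$; the others, and in particular the possibility of several such vertices attaching to \emph{different} edges of the cycle (as $v_6$ and $v_7$ do in the graph $G_2$ of Figure~\ref{fig:Gis}), are precisely what generate the ten sporadic families $\mathcal{G}_1,\dots,\mathcal{G}_{10}$. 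So the dichotomy between ``$G\in\calH$'' and ``$G$ is a $P_4$-free expansion of some $G_i$'' is not, as your last paragraph suggests, a matter of some of six classes being empty; it requires determining which combinations of the extra trace types can coexist and showing that each feasible combination collapses to one of the $G_i$. That analysis is the real content of the theorem, and your proposal defers all of it to an unexecuted ``bounded case analysis'' while starting from a classification that is incorrect as stated. As written, the argument establishes the reduction to an induced $C_5$ and nothing more.
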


\begin{lemma}[\cite{Dhaliwal2017}]\label{lem:module}
If $G$ is $k$-vertex-critical with a non-trivial module $M$, then $M$ is $m$-vertex-critical for some $m<k$.
\end{lemma}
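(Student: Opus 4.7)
The plan is to set $m := \chi(G[M])$, show that $m < k$, and then show $G[M]$ is $m$-vertex-critical by a recoloring argument that exploits the module structure of $M$. For the first bound, since $M$ is non-trivial we have $V(G) \setminus M \neq \emptyset$, so for any $w \in V(G) \setminus M$ we get $G[M] \subseteq G - w$, and by $k$-vertex-criticality $\chi(G - w) \leq k - 1$. Hence $m \leq k - 1 < k$.

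Next, suppose for contradiction that $G[M]$ is not $m$-vertex-critical. Then there exists $v \in M$ with $\chi(G[M] - v) = m$. The strategy is to construct a proper $(k-1)$-coloring of $G$, contradicting $\chi(G) = k$. Let $N_{\text{ext}}$ be the set of vertices in $V(G) \setminus M$ that are complete to $M$; by the module property, every vertex outside $M$ is either in $N_{\text{ext}}$ or anti-complete to $M$, so there are no other edges between $M$ and $V(G) \setminus M$.

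Take a $(k-1)$-coloring $c$ of $G - v$, which exists since $G$ is $k$-vertex-critical. The restriction of $c$ to $M \setminus \{v\}$ is a proper coloring of $G[M] - v$, so it uses at least $\chi(G[M] - v) = m$ distinct colors. Because every vertex of $N_{\text{ext}}$ is adjacent to every vertex of $M \setminus \{v\}$ (which is non-empty as $|M| \geq 2$), the color sets $c(M \setminus \{v\})$ and $c(N_{\text{ext}})$ are disjoint subsets of $\{1, \ldots, k-1\}$. Therefore $|c(N_{\text{ext}})| \leq k - 1 - m$, leaving at least $m$ colors in $\{1, \ldots, k-1\} \setminus c(N_{\text{ext}})$ unused on $N_{\text{ext}}$.

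Finally, choose an $m$-coloring of the full $G[M]$ using colors from $\{1, \ldots, k-1\} \setminus c(N_{\text{ext}})$, which is possible since $\chi(G[M]) = m$ and at least $m$ such colors are available. Define a new coloring of $G$ by keeping $c$ on $V(G) \setminus M$ and using this $m$-coloring on $M$ (including $v$). Edges inside $M$ are properly colored by construction; edges between $M$ and $N_{\text{ext}}$ use disjoint color palettes; edges inside $V(G) \setminus M$ are unchanged from $c$; and there are no edges from $M$ to $V(G) \setminus (M \cup N_{\text{ext}})$. This yields a proper $(k-1)$-coloring of $G$, a contradiction. The main obstacle is the counting in the third paragraph: it is only the module hypothesis that forces $c(M \setminus \{v\})$ and $c(N_{\text{ext}})$ to be disjoint, and without this disjointness the budget of $k-1$ colors would not split cleanly into the $m$ colors needed for $M$ and the colors already forced on $N_{\text{ext}}$.
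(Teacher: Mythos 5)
Your proof is correct. Note that the paper itself gives no proof of this lemma --- it is quoted from the cited reference \cite{Dhaliwal2017} --- but your argument is the standard one for this fact: take a vertex $v\in M$ whose deletion does not lower $\chi(G[M])$, observe that in a $(k-1)$-coloring of $G-v$ the colors on $M\setminus\{v\}$ and on the external neighborhood of the module are disjoint, and recolor all of $M$ with colors unused on that neighborhood to obtain a $(k-1)$-coloring of $G$, a contradiction. All steps, including the count showing at least $m$ colors remain available for $M$, check out.
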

%
%

\begin{lemma}[\cite{KarthickMaffraygemcogem}]\label{lem:chromaticnumberofC5exp}
If $G\in \mathcal{G}_{1}^{\ast}$ has order $n$, then $\chi(G)=\max\left(\omega(G),\left\lceil\frac{n}{2}\right\rceil\right)$.
\end{lemma}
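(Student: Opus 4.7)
The plan is to prove the lower and upper bound on $\chi(G)$ separately. Write $G=C_5(K_{a_1},K_{a_2},K_{a_3},K_{a_4},K_{a_5})$ with $n=\sum a_i$ and $k=\max(\omega(G),\lceil n/2\rceil)$.

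\textbf{Lower bound.} Each $A_i$ is a clique, so any stable set of $G$ picks at most one vertex from each $A_i$, and the indices of those chosen $A_i$'s must form a stable set in $C_5$. Since $\alpha(C_5)=2$, we get $\alpha(G)=2$; hence $\chi(G)\ge\lceil n/\alpha(G)\rceil=\lceil n/2\rceil$, and $\chi(G)\ge\omega(G)$ is trivial.

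\textbf{Upper bound.} By a cyclic relabelling I assume $\omega(G)=a_1+a_2$ is the largest adjacent sum in $C_5$; maximality forces $a_3\le a_1$ and $a_5\le a_2$. I treat two cases. In \emph{Case 1}, $\omega(G)\ge\lceil n/2\rceil$, so $k=a_1+a_2$. I color $A_1$ with $\{1,\dots,a_1\}$, $A_2$ with $\{a_1+1,\dots,a_1+a_2\}$, reuse $\{1,\dots,a_3\}$ on $A_3$ (valid as $A_3\not\sim A_1$) and $\{a_1+1,\dots,a_1+a_5\}$ on $A_5$, and color $A_4$ (non-adjacent to $A_1\cup A_2$) with any $a_4$ of the $(a_1+a_2)-(a_3+a_5)$ colors unused on $A_3\cup A_5$; this suffices because $a_3+a_4+a_5=n-(a_1+a_2)\le a_1+a_2$ by the Case~1 hypothesis.

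In \emph{Case 2}, $k=\lceil n/2\rceil>\omega(G)$, so every adjacent sum satisfies $a_i+a_{i+1}\le n/2$. I partition $V(G)$ into $\lfloor n/2\rfloor$ stable pairs---each consisting of one vertex from $A_i$ and one from $A_j$ with $\{i,j\}$ non-adjacent in $C_5$---together with, when $n$ is odd, one singleton, and use each part as a colour class. To produce the pairs I seek non-negative integers $x_{13},x_{14},x_{24},x_{25},x_{35}$ (the counts of pairs drawn from the five non-adjacent bipartitions) solving the saturation system $x_{13}+x_{14}=a_1$, $x_{24}+x_{25}=a_2$, $x_{13}+x_{35}=a_3$, $x_{14}+x_{24}=a_4$, $x_{25}+x_{35}=a_5$. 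This system has the unique real solution
\[
x_{35}=\tfrac{n}{2}-(a_1+a_2),\; x_{14}=\tfrac{n}{2}-(a_2+a_3),\; x_{25}=\tfrac{n}{2}-(a_3+a_4),\; x_{13}=(a_1+a_2+a_3)-\tfrac{n}{2},\; x_{24}=(a_2+a_3+a_4)-\tfrac{n}{2},
\]
each of which is $\ge 0$ exactly because every adjacent sum is $\le n/2$. When $n$ is even these are integers and the pairs give the required partition directly. When $n$ is odd I first remove one vertex from some $A_i$ with $a_i\ge 2$ (if no such $A_i$ exists then $n=5$, all $a_i=1$, and the $3$-colouring of $C_5$ is direct) and apply the even-case construction to the remainder, giving the removed vertex its own colour; the new $G'$ still satisfies $\omega(G')\le\omega(G)\le(n-1)/2=n'/2$, so the construction applies, and the total colour count is $(n-1)/2+1=\lceil n/2\rceil=k$.

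\textbf{Main obstacle.} The delicate step is Case~2, specifically the integrality/parity handling when $n$ is odd. If the one-vertex removal interacts badly with the saturation system in some boundary configuration, my backup plan is to re-prove the existence of the pairs by working in the complement $\overline{G}$---which is an independent-set expansion of $\overline{C_5}\cong C_5$---and applying the Tutte--Berge formula to produce a matching of size $\lfloor n/2\rfloor$; the required verification that $o(\overline{G}-U)-|U|\le n\bmod 2$ reduces to a short case analysis on which of the five classes $A_i$ are fully, partially, or not contained in $U$.
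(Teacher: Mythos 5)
Your proposal is correct. Note, however, that the paper does not prove this statement at all: Lemma~\ref{lem:chromaticnumberofC5exp} is quoted directly from Karthick and Maffray \cite{KarthickMaffraygemcogem}, so there is no in-paper argument to compare against. Your self-contained proof checks out in every step. The lower bound via $\alpha(G)=2$ is immediate. In Case~1, the inequalities $a_3\le a_1$ and $a_5\le a_2$ follow from maximality of $a_1+a_2$, the colour sets assigned to $A_3$ and $A_5$ are nested inside those of their non-neighbours $A_1$ and $A_2$ respectively, and the count $(a_1+a_2)-(a_3+a_5)\ge a_4$ is exactly the case hypothesis $n\le 2\omega(G)$. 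In Case~2, your saturation system is the right device: summing all five equations gives total $n/2$, the closed-form solution is verified by eliminating the complementary pairs of equations, and non-negativity of the five variables is precisely the condition that every adjacent sum is at most $n/2$, which is the case hypothesis. The odd-$n$ reduction is sound because deleting a vertex cannot increase any adjacent sum, and $\omega(G)\le(n-1)/2$ in Case~2 guarantees the reduced instance still satisfies the hypothesis; the all-ones boundary case is $C_5$ itself. Your backup plan via a perfect matching in $\overline{G}$ is unnecessary given that the direct solution of the system already works. The one presentational quibble is that you should state explicitly that $\omega(G)=\max_i(a_i+a_{i+1})$ because the maximal cliques of $G$ are exactly the sets $A_i\cup A_{i+1}$, since you use this implicitly in both directions.
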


\begin{lemma}[\cite{KarthickMaffraygemcogem}]\label{lem:vvery goodstablesetG2G10}
Let $i\in\{2,3,\ldots, 10\}$ and $G\in \mathcal{G}_i^{\ast}$. If $x_1\in A_1$, $x_4\in A_4$, and $x_6\in A_6$, then $\{x_1,x_4,x_6\}$ is a very good stable set in $G$.
\end{lemma}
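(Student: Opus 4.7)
The plan is to reduce the statement about an arbitrary clique expansion $G \in \mathcal{G}_i^{\ast}$ to a finite combinatorial verification on the nine small graphs $G_2,\ldots,G_{10}$. The key structural observation is that in a clique expansion $G = G_i(A_1,\ldots,A_n)$ (with each $A_j$ a clique and adjacencies between distinct $A_j$'s inherited from $G_i$), the maximal cliques of $G$ are in bijection with the maximal cliques of $G_i$: if $K'$ is a maximal clique of $G$, then the index set $I = \{j : K' \cap A_j \neq \emptyset\}$ is a clique of $G_i$, and maximality forces both $K' \cap A_j = A_j$ for each $j \in I$ (since $A_j$ is a clique joined completely to the rest of $K'$) and $I$ itself to be a maximal clique of $G_i$. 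Hence every maximal clique of $G$ has the form $\bigcup_{v_j \in K} A_j$ for a maximal clique $K$ of $G_i$.

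Given this correspondence, I would note two consequences. First, the set $\{x_1, x_4, x_6\}$ (with $x_j \in A_j$) is a stable set of $G$ iff $v_1, v_4, v_6$ are pairwise non-adjacent in $G_i$. Second, $\{x_1, x_4, x_6\}$ is a very good stable set of $G$ iff every maximal clique of $G_i$ contains at least one of $v_1, v_4, v_6$. Both properties can therefore be checked directly on the base graph $G_i$ from its edge set in Figure~\ref{fig:Gis}, independent of the particular vertices $x_1, x_4, x_6$ chosen.

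The remainder of the proof is a case analysis over $i = 2,\ldots,10$. In each $G_i$ one sees immediately from the drawing that $v_1, v_4, v_6$ are pairwise non-adjacent (the pair $v_1 v_4$ is a diagonal of the core pentagon $v_1 v_2 v_3 v_4 v_5$, and in all nine graphs $v_6$ is attached only to some of $\{v_2, v_3, v_5, v_7, v_8, v_9\}$, never to $v_1$ or $v_4$). For the very-good property, I would list the maximal cliques of $G_i$ (each $G_i$ has only a handful) and verify that each hits $\{v_1, v_4, v_6\}$. A useful uniform remark is that any maximal clique avoiding both $v_1$ and $v_4$ must live inside $V(G_i) \setminus \{v_1, v_4\}$, and inspection shows that in each $G_i$ every maximal clique of this remaining graph contains $v_6$ (typically it is a clique built around the edge $v_2 v_3$ or one of the extra vertices $v_7,\ldots$, all of which share $v_6$ as a common neighbour in the relevant sub-configuration).

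The only real obstacle is the bookkeeping of the nine-case verification, since the adjacency patterns of $v_6, v_7, v_8, v_9$ differ across $G_2, \ldots, G_{10}$ and no single uniform argument covers all of them; however, each graph has at most nine vertices and a small number of maximal cliques, so the verification is routine once the clique-expansion reduction above has been made.
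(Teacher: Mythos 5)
The paper does not prove this lemma at all --- it is imported verbatim from Karthick and Maffray~\cite{KarthickMaffraygemcogem} --- so there is no internal proof to compare against and your argument has to stand on its own. It does. The reduction you make is the right one and is sound: in a clique expansion $G=G_i(A_1,\ldots,A_n)$ the maximal cliques of $G$ are exactly the sets $\bigcup_{v_j\in K}A_j$ for $K$ a maximal clique of $G_i$, so $\{x_1,x_4,x_6\}$ is stable (resp.\ very good) in $G$ if and only if $\{v_1,v_4,v_6\}$ is stable (resp.\ meets every maximal clique) in $G_i$, and the remaining work is a finite verification on the nine base graphs, which does succeed; for instance the maximal cliques of $G_2$ are $\{v_1,v_2\}$, $\{v_1,v_5\}$, $\{v_1,v_7\}$, $\{v_4,v_5\}$, $\{v_2,v_3,v_6\}$, $\{v_3,v_4,v_7\}$, each meeting $\{v_1,v_4,v_6\}$. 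One caution about your ``uniform remark'': it is false as literally stated that every maximal clique of $G_i-\{v_1,v_4\}$ contains $v_6$ --- in $G_2$, deleting $v_1$ and $v_4$ isolates $v_5$, so $\{v_5\}$ is a maximal clique of the remaining graph avoiding $v_6$ (and $\{v_3,v_7\}$ is another). What is true, and what you actually need, is only that the maximal cliques \emph{of $G_i$ itself} that avoid both $v_1$ and $v_4$ all contain $v_6$; since your primary plan is to list the maximal cliques of each $G_i$ and check them directly, this is a blemish in an aside rather than a gap in the proof.
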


The following observation is easy to establish by the fact that every induced $C_5$ in $G_i$ for each $i=2,\ldots, 10$ meets exactly two vertices in the set $\{v_1,v_4,v_6\}$ (where $v_i$ refers to the vertex labels of each graph as shown in Figure~\ref{fig:Gis}).
\begin{obs}\label{obs:vvery goodsetintat2}
Let $i\in \{\blue{2},\ldots,10\}$. If $G\in \mathcal{G}_i^{\blue{\ast}}$ 
and $C\subset V(G)$ such that \blue{$C$ induces a clique expansion of $C_5$, then $C\cap A_i\neq \emptyset$ for exactly two $i\in \{1,4,6\}$.}  \hfill$\Box$
\end{obs}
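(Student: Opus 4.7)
The plan is to pass from the $C_5$-expansion structure inside $G$ to an induced $C_5$ of the base graph $G_i$, and then invoke the stated fact that every induced $C_5$ of $G_i$ (for $i\in\{2,\ldots,10\}$) meets $\{v_1,v_4,v_6\}$ in exactly two vertices. Write $G[C]=G[K_1\cup\cdots\cup K_5]$, where each $K_a$ is a clique, $K_a$ is complete to $K_{a\pm 1}$ and anti-complete to $K_{a\pm 2}$ (indices mod $5$), and set $J=\{j : C\cap A_j\neq\emptyset\}$.

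The key step, and the one I expect to require the most care, is to show that for every $j\in J$ the entire set $C\cap A_j$ lies in a single clique $K_{a(j)}$ of the expansion. Since $G\in\mathcal{G}_i^{\ast}$, each $A_j$ is a clique and a module of $G$ (immediate from the definition of a clique expansion). Assume for contradiction that $C\cap A_j$ meets two distinct cliques $K_a,K_b$; the two representatives are adjacent (both lie in the clique $A_j$), forcing $K_a$ and $K_b$ to be adjacent in the $C_5$-pattern. In $C_5$ there is always a third vertex adjacent to exactly one of two adjacent vertices, so there is a clique $K_c$ adjacent to exactly one of $K_a,K_b$; any vertex of $K_c$ then distinguishes the two chosen vertices of $A_j$, contradicting the module property.

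With the well-defined map $j\mapsto a(j)$ in hand, choose for each $a\in\{1,\ldots,5\}$ some $j_a\in J$ with $a(j_a)=a$; these are five distinct indices. The adjacency between $v_{j_a}$ and $v_{j_b}$ in $G_i$ matches the adjacency between $A_{j_a}$ and $A_{j_b}$ in $G$, which in turn matches the adjacency between $K_a$ and $K_b$ in $C_5$, so $\{v_{j_1},\ldots,v_{j_5}\}$ induces a $C_5$ in $G_i$; the stated fact then yields $|J\cap\{1,4,6\}|\geq 2$. For the matching upper bound, if $|J\cap\{1,4,6\}|\geq 3$, choose $x_1\in C\cap A_1$, $x_4\in C\cap A_4$, $x_6\in C\cap A_6$; Lemma~\ref{lem:vvery goodstablesetG2G10} makes $\{x_1,x_4,x_6\}$ a stable set of size $3$ in $G[C]$, contradicting $\alpha(G[C])=\alpha(C_5)=2$ (the independence number of a clique expansion equals that of its base graph). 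Combining the two bounds gives exactly $2$, completing the proof.
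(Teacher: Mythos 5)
Your proof is correct and follows the route the paper intends: the paper dispenses with Observation~\ref{obs:vvery goodsetintat2} in one line by citing the fact that every induced $C_5$ in $G_i$ ($i=2,\ldots,10$) meets $\{v_1,v_4,v_6\}$ in exactly two vertices, and your argument is simply the careful elaboration of that reduction (using the module property of the $A_j$'s to project the clique expansion of $C_5$ onto an induced $C_5$ of $G_i$, and the stable set $\{x_1,x_4,x_6\}$ of Lemma~\ref{lem:vvery goodstablesetG2G10} for the upper bound). The only point worth tightening is in your module step: you should note that the distinguishing vertex $z\in K_c$ cannot itself lie in $A_j$ (it is non-adjacent to a vertex of $C\cap A_j$, while $A_j$ is a clique), so the module property genuinely applies.
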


Let $\Hstar$ be the set of graphs in $\calH$ where $A_i$ is a clique for $i=1,2,3,4$. Recall from Section~\ref{sec:definitions} that $S^j$ denotes the vertices in the set $S$ that are assigned color $j$. 

\begin{lemma}\label{lem:singlecolorchange}
Let $\{i,i'\}=\{1,4\}$. If $G$ is a graph in $\Hstar$ with $\chi(G)=k$ and $j,j'\in \{1,2\ldots,k\}$ such that $j'\in c(A_6)\setminus c(A_i)$ and  $j\in c(A_i)\setminus c(A_6)$,
then a new $k$-coloring of $G$ can be obtained such that $j\not\in c(A_i)$ and $j'\in c(A_i)\cap c(A_6)$ by performing one of the following:
\begin{itemize}
\item[i)] Changing the color of a vertex in $A_i$ from $j$ to $j'$, or
\item[ii)] Swapping the colors of the vertices in $A_i^j$ and $A_5^{j'}$ and if there is a 
vertex in $A_{i'}$ with color $j$, then change the color of this vertex to $j'$.
\end{itemize}
\end{lemma}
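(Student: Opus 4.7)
The plan is a case analysis on whether $j' \in c(A_5)$. Since $A_i$ is a clique and $j \in c(A_i)$, let $v$ denote the unique vertex of $A_i$ with $c(v)=j$. I would first record three preliminary observations: (P1) because $A_2$ and $A_3$ are complete to $A_6$ and $j' \in c(A_6)$, we have $j' \notin c(A_2 \cup A_3)$; (P2) because $A_5$ is complete to $A_i$, any vertex of $A_5$ colored $j$ would conflict with $v$, so $A_5^j = \emptyset$; and (P3) because $A_{i'}$ is complete to $A_5$, if $A_5^{j'} \neq \emptyset$ then necessarily $A_{i'}^{j'} = \emptyset$ (since any such vertex would conflict with a vertex of $A_5^{j'}$).

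In the first case, assume $j' \notin c(A_5)$. I would verify that option (i) works directly: the neighbors of $v$ lie in $A_i \setminus \{v\}$, in the unique set among $\{A_2, A_3\}$ complete to $A_i$, and in $A_5$; none of these contains a vertex of color $j'$ (by assumption that $j' \notin c(A_i)$, by (P1), and by the case hypothesis, respectively). Hence recoloring $v$ to $j'$ yields a valid $k$-coloring with the required properties.

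In the second case, assume $j' \in c(A_5)$, so both (P2) and (P3) apply. I would then verify that option (ii) yields a valid $k$-coloring by checking each recolored vertex. For $v$ (now color $j'$), the swap has removed $j'$ from $A_5$, while $A_i \setminus \{v\}$ and the adjacent set from $\{A_2, A_3\}$ never contained $j'$ by hypothesis and by (P1). For each $w \in A_5^{j'}$ (now color $j$), the simultaneous recolorings of $v$ and (if it exists) $z$ eliminate $j$ from $A_i$ and from $A_{i'}$, while (P2) handles the rest of $A_5$ and the hypothesis $j \notin c(A_6)$ handles $A_6$. For the at most one $z \in A_{i'}^j$ (now color $j'$), observation (P3) covers $A_{i'} \setminus \{z\}$, (P1) covers the adjacent set from $\{A_2, A_3\}$, and the post-swap state of $A_5$ covers the final neighborhood. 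In both cases the resulting coloring has $j \notin c(A_i)$ and $j' \in c(A_i) \cap c(A_6)$ (the vertex $u \in A_6$ witnessing $j' \in c(A_6)$ is never touched).

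The main obstacle is ensuring the several simultaneous recolorings in option (ii) do not introduce new monochromatic edges. The crucial structural insight driving the proof is that the completeness of $A_5$ to both $A_1$ and $A_4$, together with the completeness of $A_2, A_3$ to $A_6$, tightly constrains where the two colors $j$ and $j'$ can appear across the partition; this is exactly what forces (P2) and (P3) to hold in the interesting case and makes the swap conflict-free.
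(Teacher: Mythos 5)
Your proof is correct and follows essentially the same route as the paper's: both split on whether $j'\in c(A_5)$, apply operation (i) when it does not, and apply the swap plus the possible recoloring in $A_{i'}$ when it does. Your write-up is somewhat more detailed (e.g.\ the explicit observations (P1)--(P3) and the vertex-by-vertex verification of the swap), but the underlying argument is identical.
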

\begin{proof}
By symmetry we may assume without loss of generality that $j'\in c(A_6)\setminus c(A_1)$ and $j\in c(A_1)\setminus c(A_6)$. If $j'\not\in c(A_5)$, then simply change the color $j$ in $A_1$ to $j'$ (operation \textit{i)}). Note that $j'\not\in c(A_2)$ since $A_6$ is complete to $A_6$ and $j'\in c(A_6)$ so this is a valid $k$-coloring. If $j'\in c(A_5)$, then swap the colors of $A_5^{j'}$ and $A_1^{j}$. Since $j\not\in c(A_6)$, this is a valid $k$-coloring if and only if $j\not\in c(A_4)$. Suppose $j\in c(A_4)$. Since $j'$ is no longer in $c(A_5)$ and is not in $c(A_3)$ since $A_6$ is complete to $A_3$, we can simply change the color $j$ in $A_4$ to $j'$ and the result is a valid $k$-coloring (operation \textit{ii)}).
\end{proof}

\begin{lemma}\label{lem:nocriticalinH}
There are no $k$-vertex-critical graphs in $\calH$.
\end{lemma}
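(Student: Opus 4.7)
The approach is by contradiction: assume $G\in\calH$ is $k$-vertex-critical.

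\textbf{Step 1 (Reduction to $\Hstar$).} For each $i\in\{1,2,3,4\}$, the adjacency rules defining $\calH$ force every vertex of $V(G)\setminus A_i$ to be either complete or anti-complete to $A_i$, so $A_i$ is a module of $G$. If $|A_i|\geq 2$, then $A_i$ is a non-trivial module, so by Lemma~\ref{lem:module} the induced subgraph $G[A_i]$ is $m$-vertex-critical for some $m\geq 2$. Since $G[A_i]$ is $P_4$-free by definition of $\calH$, it is a cograph; and any vertex-critical cograph on at least two vertices is a complete graph (vertex-criticality forces connectedness, and a connected cograph is a join, so inductively $G[A_i]=K_a\vee K_b=K_{a+b}$). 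Hence each such $A_i$ is a clique and $G\in\Hstar$.

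\textbf{Step 2 (Color alignment via Lemma~\ref{lem:singlecolorchange}).} Fix a $k$-coloring $c$ of $G$, and apply Lemma~\ref{lem:singlecolorchange} iteratively with $i=1$: whenever both $c(A_6)\setminus c(A_1)$ and $c(A_1)\setminus c(A_6)$ are non-empty, the lemma produces a new $k$-coloring in which $|c(A_1)\cap c(A_6)|$ strictly increases by one. Since these sets have bounded size, the process terminates with a $k$-coloring satisfying $c(A_6)\subseteq c(A_1)$ or $c(A_1)\subseteq c(A_6)$. Repeating the same procedure with $i=4$ yields an analogous containment between $c(A_6)$ and $c(A_4)$. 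With care, the two procedures can be interleaved — tracking the joint quantity $|c(A_6)\setminus(c(A_1)\cup c(A_4))|$ as a monotone invariant — so that both containments hold simultaneously in the final coloring.

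\textbf{Step 3 (Consolidating to $k-1$ colors).} Starting from the aligned $k$-coloring, the goal is to exhibit a $(k-1)$-coloring of $G$, contradicting $\chi(G)=k$. The guiding idea is that once the colors of $A_6$ are absorbed into $c(A_1)\cap c(A_4)$ (and recall that $A_6$ is anti-complete to both $A_1$ and $A_4$), the ``backbone'' $A_1\cup A_2\cup A_3\cup A_4\cup A_5$ is effectively a $C_5$-clique-expansion (up to possible structure within $A_5$), whose chromatic number is controlled by Lemma~\ref{lem:chromaticnumberofC5exp}. Combining this with the fact that $|A_2|+|A_3|+\omega(A_6)\leq\omega(G)\leq k$, one color class on $A_6$ turns out to be redundant with colors in $A_1\cap A_4$, and a recoloring of $A_5$ permits eliminating it.

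\textbf{Main obstacle.} The most delicate point is Step 2: case~(ii) of Lemma~\ref{lem:singlecolorchange} swaps colors between $A_i$ and $A_5$ and possibly modifies a vertex of $A_{i'}$, so naively applying the lemma first for $i=1$ and then for $i=4$ may undo previously established alignment on the $A_1$-side. Designing the correct joint invariant that monotonically decreases under each application, or carrying out the casework on which vertex of $A_{i'}$ is affected, is the technical crux. In Step 3, one must verify — possibly by further substructural analysis exploiting the (gem, co-gem)-free condition to control the $A_5$-$A_6$ adjacencies — that the final aligned coloring truly permits a color-class merger without introducing a monochromatic edge.
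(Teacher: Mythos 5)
Your Step 1 matches the paper's first claim exactly (each $A_i$, $i=1,\dots,4$, is a module, hence vertex-critical by Lemma~\ref{lem:module}, hence a clique since it is $P_4$-free), and your overall strategy --- align colors via Lemma~\ref{lem:singlecolorchange} and then merge a color class --- is in the same spirit as the paper's argument. But Steps 2 and 3 are not proofs; they are statements of intent, and your own ``Main obstacle'' paragraph concedes that the two points where all the difficulty lives (the joint monotone invariant for the interleaved recolorings, and the verification that a color class can actually be eliminated) are left open. That is a genuine gap: the lemma is precisely the assertion that such a merger is possible, so ``one color class on $A_6$ turns out to be redundant'' cannot be taken on faith. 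Moreover, your appeal to Lemma~\ref{lem:chromaticnumberofC5exp} for the ``backbone'' $A_1\cup\cdots\cup A_5$ does not go through as stated: that lemma is about clique expansions of $C_5$, but at this stage nothing forces $A_5$ (or $A_6$) to be a clique, and the $A_5$--$A_6$ adjacency is unconstrained, so the backbone need not lie in $\mathcal{G}_1^{\ast}$.

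The paper closes these gaps with two ingredients absent from your proposal. First, it proves the quantitative claim $\chi(G[A_6])<|A_4|$ (with $|A_4|\le|A_1|$ WLOG) by fixing a $k$-coloring in which some $x\in A_4$ is the unique vertex colored $k$, aligning colors with Lemma~\ref{lem:singlecolorchange} so that $|c(A_1)\cap c(A_6)|>|A_4|-1$, and recoloring $x$ with a color in $(c(A_1)\cap c(A_6))\setminus c(A_4)$. Second --- and this is the key structural move you are missing --- it defines the auxiliary graph $G'$ obtained by making $A_5$ complete to $A_6$, and shows $G'$ is still $k$-vertex-critical: for every choice of a unique vertex $x$ colored $k$, the alignment procedure yields a $k$-coloring of $G$ with $c(A_5)\cap c(A_6)=\emptyset$, which is then a valid coloring of $G'$. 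In $G'$ the sets $A_5$ and $A_6$ become modules, hence cliques, and only then does the final recoloring work: one finds $c_2\in c(A_4)\cap c(A_2)$ (using $|c(A_6)|=|A_6|<|A_4|$) and $c_3\in c(A_5)\setminus(c(A_2)\cup c(A_3))$ (using criticality applied to $G'-(A_1\cup A_4\cup A_5)$), swaps $A_4^{c_2}$ with $A_5^{c_3}$, and frees color $k$. Without the passage to $G'$, the uncontrolled structure inside $A_5$ and $A_6$ blocks your Step 3, so the argument as proposed does not close.
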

\begin{proof}
Suppose by way of contradiction that $G$ is a $k$-vertex-critical graph in $\calH$. We will first establish a number of claims.

\begin{claim}\label{cla:GcriticalinHthenGinHstar}
$G\in \Hstar$.
\end{claim} 
\begin{proof}[Proof of Claim~\ref{cla:GcriticalinHthenGinHstar}]
By definition of ${\cal H}$, $A_i$ is a module for $i=1,2,3,4$, and therefore must be critical by Lemma~\ref{lem:module}. Since each $A_i$ must be $P_4$-free, it follows that $A_i$ is a clique for $i=1,2,3,4$. Thus, $G\in\Hstar$.
\end{proof}

\noindent For the rest of the proof, suppose without loss of generality that $|A_4|\le |A_1|$.

\begin{claim}\label{cla:chiA6<=cA1andcA4}
$\chi(G[A_6])< |A_4|$.
\end{claim}
\begin{proof}[Proof of Claim~\ref{cla:chiA6<=cA1andcA4}]
If there exists a $k$-coloring of $G$ such that $|c(A_6)|< |A_4|$, then we are done. So suppose for every $k$-coloring of $G$, $|c(A_6)|\ge|A_4|$.  Fix a $k$-coloring of $G$ such that $x\in A_4$ is the only vertex colored $k$. By successive applications of Lemma~\ref{lem:singlecolorchange}, we can get a new $k$-coloring of $G$ where $x$ is still the only vertex colored $k$, $c(A_4\setminus \{x\})\subset c(A_6)$ and $|c(A_1)\cap c(A_6)|>|A_4|-1$. Fix such a $k$-coloring of $G$. So there is a color $j\in (c(A_1)\cap c(A_6))\setminus c(A_4)$. Since $A_1$ is complete to $A_5$ and $A_6$ is complete to $A_3$, it follows that $j\not\in c(A_3)\cup c(A_5)$. Therefore, we can change the color of $x$ from $k$ to $j$ to produce a $(k-1)$-coloring of $G$, a contradiction.
\end{proof}

\noindent Let $G'$ be the graph defined by $V(G')=V(G)$ and $E(G')=E(G)\cup \{a_5a_6:a_5\in A_5\text{ and } a_6\in A_6\}$.

\begin{claim}\label{cla:G'alsokcrit}
$G'$ is $k$-vertex-critical and $A_5$ and $A_6$ are both cliques.
\end{claim}
\begin{proof}[Proof of Claim~\ref{cla:G'alsokcrit}]
Fix a $k$-coloring of $G$ such that $x$ is the only vertex colored $k$. We now show that no matter which set $A_i$ that $x$ is in, we can obtain a $k$-coloring of $G$ where $x$ is still the only vertex colored $k$ and where $c(A_5)\cap c(A_6)=\emptyset$.\\

\noindent \textit{Case 1:} $x\in A_2\cup A_3\cup A_5$.

If $|c(A_6)|\le |A_4|$, then by successive applications of Lemma~\ref{lem:singlecolorchange} we can obtain a new $k$-coloring where $x$ is still the only vertex colored $k$ and $c(A_6)\subseteq c(A_4)$. Therefore $c(A_5)\cap c(A_6)\subseteq c(A_5)\cap c(A_4)=\emptyset$ since $A_4$ is complete to $A_5$. Thus, we may assume $|c(A_6)|>|A_4|$. Then by successive applications of Lemma~\ref{lem:singlecolorchange} we can obtain a new $k$-coloring where $x$ is still the only vertex colored $k$ and $c(A_4)\subset c(A_6)$. From  Claim~\ref{cla:GcriticalinHthenGinHstar} and Claim~\ref{cla:chiA6<=cA1andcA4},  $|c(A_4)|=|A_4|> \chi(G[A_6])$. Therefore, we can obtain a new $k$-coloring of $G$ by coloring $A_6$ with any $\chi(G[A_6])$ colors from $c(A_4)$. Note that this will still be a valid $k$ coloring as $c(A_4)\cap c(A_5)=\emptyset$ and $c(A_4)\cap c(A_3)=\emptyset$ since $A_4$ is complete to both sets and $c(A_4)\cap c(A_2)=\emptyset$ since $A_6$ is complete to $A_2$ and $c(A_4)\subset c(A_6)$.  In such a $k$-coloring, we have $c(A_5)\cap c(A_6)=\emptyset$ since $c(A_4)\cap c(A_5)=\emptyset$. \\

\noindent\textit{Case 2:} $x\in A_6$.

If $|c(A_6)|\le |A_4|$, then we may successively apply Lemma~\ref{lem:singlecolorchange} to obtain a new $k$-coloring of $G$ where $x$ remains the only vertex with color $k$ and $c(A_6)\setminus\{k\}\subseteq c(A_4)$. Since $k\not\in c(A_5)$, we now have $c(A_6)\cap c(A_5)\subseteq c(A_4)\cap c(A_5)=\emptyset$. Thus, we may assume $|c(A_6)|>|A_4|$. We may successively apply Lemma~\ref{lem:singlecolorchange} to obtain a new $k$-coloring of $G$ where $x$ remains the only vertex with color $k$ and $c(A_4)\subseteq c(A_6)\setminus\{k\}$. By Claim~\ref{cla:GcriticalinHthenGinHstar} and Claim~\ref{cla:chiA6<=cA1andcA4}, it follows that $|c(A_4)\cap c(A_6)|> \chi(G[A_6])$. Therefore, we can obtain a new $k$-coloring of $G$ by coloring $A_6$ with any $\chi(G[A_6])$ colors from $c(A_4)$ as in Case 1. In such a $k$-coloring, we have $c(A_5)\cap c(A_6)=\emptyset$ since $c(A_4)\cap c(A_5)=\emptyset$. \\

\noindent \textit{Case 3:} $x\in A_1\cup A_4$.

Let $\{i,i'\}=\{1,4\}$ and suppose that $x\in A_i$. If $|c(A_6)|\le |A_{i'}|$, then we may successively apply Lemma~\ref{lem:singlecolorchange} to obtain a new $k$-coloring of $G$ where $x$ remains the only vertex with color $k$ and $c(A_6)\subseteq c(A_{i'})$. In this $k$-coloring, $c(A_5)\cap c(A_6)=\emptyset$. Thus, we may assume $|c(A_6)|>|A_{i'}|$. We may successively apply Lemma~\ref{lem:singlecolorchange} to obtain a new $k$-coloring of $G$ where $x$ remains the only vertex with color $k$ and $c(A_{i'})\subseteq c(A_6)$. By Claim~\ref{cla:GcriticalinHthenGinHstar} and Claim~\ref{cla:chiA6<=cA1andcA4} and since $|A_{i'}|\ge|A_4|$, it follows that $|c(A_{i'})\cap c(A_6)|> \chi(A_6)$. Therefore, we can obtain a new $k$-coloring of $G$ by coloring $A_6$ with any $\chi(A_6)$ colors from $c(A_{i'})$. In such a $k$-coloring, we have $c(A_5)\cap c(A_6)=\emptyset$ since $c(A_{i'})\cap c(A_5)=\emptyset$.\\


In each case, it follows that the $k$-coloring of $G$ is also a $k$-coloring for $G'$. Since $G'$ is a $k$-colorable supergraph of $G$ and $\chi(G)=k$, we must have $\chi(G')=k$. Further, since $x$ is the only vertex colored $k$, $G'-x$ is $(k-1)$-colorable. Since this holds for any $x\in V(G')$, $G'$ is $k$-vertex-critical. Hence, $A_5$ and $A_6$ are modules in $G'$ and therefore critical by Lemma~\ref{lem:module}. Since $A_5$ and $A_6$ are $P_4$-free and critical, they must be cliques. 
\end{proof}

\red{Now, we continue the proof of Lemma~\ref{lem:nocriticalinH}. From Claim~\ref{cla:G'alsokcrit}, we may assume $G'$ is $k$-vertex-critical.}
Fix a $k$-coloring of $G'$ such that $x\in A_6$ is the only vertex with color $k$. 
If there exists $c_1\in c(A_1)\setminus (c(A_6)\cup c(A_3))$ or $c_1\in c(A_4)\setminus (c(A_6)\cup c(A_2))$, then $c_1\not\in c(A_5)$ so we can change the color of $x$ from $k$ to $c_1$ and obtain a $(k-1)$-coloring of $G'$, a contradiction. Therefore, we have $c(A_1)\subseteq c(A_6)\cup c(A_3)$ and $c(A_4)\subseteq c(A_6)\cup c(A_2)$. Note that $|c(A_6)|=|A_6|$ from Claim~\ref{cla:G'alsokcrit} and since $|A_6|< |A_4|$ from Claim~\ref{cla:chiA6<=cA1andcA4}, it follows that there is a color $c_2\in c(A_4)\cap c(A_2)$. Further, if $c(A_5)\subseteq c(A_2)\cup c(A_3)$, then $k=|A_6|+|A_2|+|A_3|=\chi(G'-(A_1\cup A_4\cup A_5))$, contradicting $G'$ being $k$-vertex-critical. Therefore, there is a color $c_3\in c(A_5)\setminus (c(A_2)\cup c(A_3))$. Thus, we can swap the colors of the vertices in the sets $A_4^{c_2}$ and $A_5^{c_3}$ to obtain a new $k$-coloring of $G'$. Finally, we can now change the color of $x$ to $c_3$ to obtain a $(k-1)$-coloring of $G'$, a contradiction. \red{Thus, the Lemma holds.}
\end{proof}

\begin{lemma}\label{lem:nocritinG2G10}
There are no vertex-critical graphs in $\mathcal{G}_2\cup \mathcal{G}_3\cup \cdots \cup \mathcal{G}_{10}$.
\end{lemma}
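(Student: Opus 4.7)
The plan is to argue by contradiction. Suppose $G \in \mathcal{G}_i$ with $i \in \{2, \ldots, 10\}$ is $k$-vertex-critical. My first step is to move to the clique-expansion class: each set $A_j$ in the $P_4$-free expansion of $G_i$ is a module of $G$, so by Lemma~\ref{lem:module} any non-trivial $A_j$ must be vertex-critical, and since it is $P_4$-free (hence perfect), it must be a clique. Singleton sets $A_j$ are cliques trivially, so $G \in \mathcal{G}_i^*$.

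The technical centerpiece is to establish $\chi(G) = \omega(G)$. By Lemma~\ref{lem:vvery goodstablesetG2G10}, choosing $x_j \in A_j$ for $j \in \{1, 4, 6\}$ yields a very good stable set $S = \{x_1, x_4, x_6\}$; since $S$ is stable and meets every maximal clique in exactly one vertex, $\omega(G - S) = \omega(G) - 1$. I would then prove, by strong induction on $|V(G')|$, that every $G' \in \mathcal{G}_j^*$ with $j \in \{2, \ldots, 10\}$ satisfies $\chi(G') = \omega(G')$. The structural pillar of this induction is the observation that for every $i \in \{2, \ldots, 10\}$ and every $U \subseteq \{v_1, v_4, v_6\}$, the graph $G_i - U$ is either perfect or itself lies in $\mathcal{G}_j$ for some $j \in \{2, \ldots, 10\}$; in particular $G_i - \{v_1, v_4, v_6\}$ is always perfect (a direct case check reveals it is $P_3 + K_1$, $P_4$, $2K_2$, $P_5$, a triangle with a pendant plus an isolated vertex, $P_3 + K_2$, or $C_6$), and one verifies that $G_i - U$ for smaller $U$ either remains $C_5$-free (and hence perfect) or is a smaller graph in $\mathcal{G}_j$ for $j \in \{2, \ldots, 10\}$, so the inductive hypothesis takes over. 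Combining these gives $\chi(G - S) = \omega(G - S) = \omega(G) - 1$; coloring $G - S$ optimally and assigning the independent set $S$ one new color yields $\chi(G) \le \omega(G)$.

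Having established $\chi(G) = \omega(G) = k$, $G$ must contain an induced $K_k$, and $k$-criticality forces $G = K_k$: any vertex outside the $K_k$ would give $G - v \supseteq K_k$, so $\chi(G - v) \ge k$, contradicting $\chi(G - v) = k - 1$. But $G \in \mathcal{G}_i^*$ with $i \in \{2, \ldots, 10\}$ contains an induced $C_5$, namely one vertex taken from each of $A_1, \ldots, A_5$, since $v_1 v_2 v_3 v_4 v_5$ forms the outer $C_5$ in each $G_i$. This contradicts $G \cong K_k$. The main obstacle is the inductive step of the middle paragraph: one must verify that removing $S$ from $G$ never produces a graph in $\mathcal{G}_1^*$ (where $\chi > \omega$ is possible via Lemma~\ref{lem:chromaticnumberofC5exp}) or a troublesome element of $\mathcal{H}$. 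This requires a careful, essentially finite enumeration of the possible residual quotient structures of $G - S$ according to which of $|A_1|, |A_4|, |A_6|$ equal one, ensuring that each residual case either falls into a perfect clique expansion or into some $\mathcal{G}_j^*$ with $j \in \{2, \ldots, 10\}$ to which induction applies.
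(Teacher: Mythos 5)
Your reduction to clique expansions via Lemma~\ref{lem:module} matches the paper, and starting from the very good stable set $S=\{x_1,x_4,x_6\}$ is the right opening move. But the centerpiece of your argument --- the claim that every $G'\in\mathcal{G}_j^{\ast}$ with $j\in\{2,\ldots,10\}$ satisfies $\chi(G')=\omega(G')$ --- is false, so no induction can establish it. Concretely, take the clique expansion of $G_2$ with $|A_1|=\cdots=|A_5|=3$ and $|A_6|=|A_7|=1$. Since $v_1v_2v_3v_4v_5$ induces a $C_5$ in $G_2$, the subgraph on $A_1\cup\cdots\cup A_5$ is a clique expansion of $C_5$ on $15$ vertices, so by Lemma~\ref{lem:chromaticnumberofC5exp} its chromatic number is at least $\lceil 15/2\rceil=8$; on the other hand the largest cliques of $G_2$ are the triangles through $v_6$ or $v_7$, so $\omega(G')=3+3+1=7<8\le\chi(G')$. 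The same phenomenon occurs throughout $\mathcal{G}_2^{\ast}\cup\cdots\cup\mathcal{G}_{10}^{\ast}$, because each $G_j$ contains the outer $C_5$ and large bags $A_1,\ldots,A_5$ already force $\chi>\omega$. Your own caveat about $G-S$ possibly landing in $\mathcal{G}_1^{\ast}$ understates the difficulty: whenever $|A_1|,|A_4|,|A_6|\ge 2$ the quotient of $G-S$ is still $G_j$ itself, and $G-S$ can have $\chi>\omega$, so the inductive hypothesis you invoke is simply unavailable and the final step ($\chi(G)=\omega(G)=k$ forces $G=K_k$) is never reached.

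The paper's proof is built precisely to get around this. After deleting $S$ it does not attempt to show $G-S$ is perfect; instead it extracts a $(k-1)$-vertex-critical induced subgraph $F$ of $G-S$, uses induction on $k$ together with Lemma~\ref{lem:nocriticalinH} and Theorem~\ref{thm:gemcogemstructure} to force $F\in\mathcal{G}_1^{\ast}$ with $\chi(F)=\lceil|V(F)|/2\rceil=k-1$, and then applies Observation~\ref{obs:vvery goodsetintat2} to add back two of $x_1,x_4,x_6$ to $F$, obtaining a proper induced subgraph of $G$ that is again a clique expansion of $C_5$ with chromatic number $k$ --- contradicting the criticality of $G$. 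Any repair of your write-up needs a device of this kind for the case where the critical subgraph of $G-S$ is a $C_5$-expansion with $\chi>\omega$; asserting $\chi=\omega$ across these classes cannot work.
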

\begin{proof}
Let $\mathcal{S}=\mathcal{G}_2\cup \mathcal{G}_3\cup \cdots \cup \mathcal{G}_{10}$ and $\mathcal{S}^{\ast}=\mathcal{G}_2^{\ast}\cup \mathcal{G}_3^{\ast}\cup \cdots \cup \mathcal{G}_{10}^{\ast}$. The proof proceeds by induction on $k$.  For $k=1,2,3$ the result is clear. For some $k>3$, suppose there are no $k'$-vertex-critical graphs in $\mathcal{S}$ for all $k'<k$ and suppose $G$ is a $k$-vertex-critical graph in $\mathcal{S}$. By Lemma~\ref{lem:module} and since $G$ is gem-free, it follows that $G\in\mathcal{S}^{\ast}$. Note that $\omega(G)\le k-1$ since $G$ is critical and not $K_k$. 

Let $x_1\in A_1$, $x_4\in A_4$, $x_6\in A_6$ and $S=\{x_1,x_4,x_6\}$. Then by Lemma~\ref{lem:vvery goodstablesetG2G10}, $S$ is a very good stable set in $G$. Since $S$ is a  very good stable set and $G$ is $k$-vertex-critical, $G-S$ must be $(k-1)$-chromatic and $\omega(G-S)<\omega(G)\le k-1$. \blue{Since $G-S$ is (gem, co-gem)-free, it is $(C_{2k+1},\overline{C_{2k+1}})$-free for all $k\ge 3$, so if $G-S$ is $C_5$-free, then it must be perfect by the Strong Perfect Graph Theorem~\cite{Chudnovsky2006}. This would then imply that $G-S$ is $K_{k-1}$, contradicting $\omega(G)<k-1$.} 
%
%

\blue{Therefore assume $G-S$ contains an induced $C_5$. Since $\chi(G-S)=k-1$, $G-S$ must
contain an induced $(k-1)$-vertex-critical subgraph $F$. By the same reasoning as for $G-S$, $F$ must also contain an induced $C_5$. Thus, by Theorem~\ref{thm:gemcogemstructure}, $F\in \mathcal{G}_i$ for some $i\in\{1,\ldots,10\}$ or $F\in \mathcal{H}$. But since $F$ is $(k-1)$-vertex-critical, Lemma~\ref{lem:nocriticalinH} gives $F\not\in\calH$ and the inductive hypothesis gives that $F\not\in \mathcal{G}_i$ for all $i\in \{2,\ldots,10\}$. Thus, $F\in \mathcal{G}_1$ and further, by Lemma~\ref{lem:module} it follows that $F\in \mathcal{G}_1^{\ast}$.}  
Since $\omega(F)\le \omega(G-S)<k-1$, Lemma~\ref{lem:chromaticnumberofC5exp} gives $k-1=\left\lceil\frac{|V(F)|}{2}\right\rceil$.
By Observation~\ref{obs:vvery goodsetintat2}, \blue{$V(F)\cap A_i\neq \emptyset$ for exactly two $i\in \{1,4,6\}$. Without loss of generality, say $V(F)\cap A_1\neq \emptyset$ and $V(F)\cap A_4\neq \emptyset$. Thus, } $G[V(F)\cup\{\blue{x_1,x_4}\}]\in\mathcal{G}_1^{\ast}$. However, we now have $G[V(F)\cup\{\blue{x_1,x_4}\}]$ being a proper induced subgraph of $G$ and, by Lemma~\ref{lem:chromaticnumberofC5exp},
\begin{align*}
\chi(G[V(F)\cup\{\blue{x_1,x_4}\}])&\le\left\lceil\frac{\left\vert V(F)\cup\{\blue{x_1,x_4}\}\right\vert}{2}\right\rceil\\
&= \left\lceil\frac{|V(F)|}{2}\right\rceil+1\\
&=k.
\end{align*}
This contradicts $G$ being $k$-vertex-critical. Therefore there are no critical graphs in $\mathcal{S}$.
\end{proof}

\begin{theorem}
For all $k\ge 1$, there are only finitely many $k$-vertex-critical (gem, co-gem)-free graphs. 
\end{theorem}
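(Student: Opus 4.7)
The plan is to apply the structural dichotomy in Theorem~\ref{thm:gemcogemstructure} and eliminate or bound each of the three structural classes: the perfect graphs, the expansions $\mathcal{G}_1,\ldots,\mathcal{G}_{10}$, and the class $\calH$. Let $G$ be a $k$-vertex-critical (gem, co-gem)-free graph, and split into these three cases.

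First, suppose $G$ is perfect. Then $\chi(G)=\omega(G)=k$, so $G$ contains a $K_k$. Since $G$ is $k$-vertex-critical, every proper induced subgraph is $(k-1)$-colorable, and in particular $G$ can have no vertex outside this $K_k$; hence $G=K_k$. That disposes of the perfect case with a unique graph for each $k$.

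Next, Lemma~\ref{lem:nocriticalinH} shows that no $k$-vertex-critical graph lies in $\calH$, and Lemma~\ref{lem:nocritinG2G10} shows that there are no vertex-critical graphs in $\mathcal{G}_2\cup\cdots\cup\mathcal{G}_{10}$. Therefore the only remaining possibility is $G\in\mathcal{G}_1$, i.e., $G$ is a $P_4$-free expansion of $C_5$. Each set $A_i$ (for $i=1,\ldots,5$) is a non-trivial module of $G$, so by Lemma~\ref{lem:module} each $A_i$ is itself vertex-critical; being $P_4$-free and critical forces each $A_i$ to be a clique. Hence $G\in\mathcal{G}_1^{\ast}$, a clique expansion of $C_5$.

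Finally, since the clique expansion of $C_5$ has independence number at least $2$, $G\ne K_k$, so $\omega(G)\le k-1$. Applying Lemma~\ref{lem:chromaticnumberofC5exp} with $n=|V(G)|$ gives
\[
k=\chi(G)=\max\!\left(\omega(G),\left\lceil \tfrac{n}{2}\right\rceil\right)=\left\lceil \tfrac{n}{2}\right\rceil,
\]
so $n\le 2k$. Thus any $k$-vertex-critical (gem, co-gem)-free graph is either $K_k$ or a clique expansion of $C_5$ on at most $2k$ vertices, and in either case there are only finitely many such graphs for each fixed $k$. The work has already been done in the prior lemmas; the only subtle point is recognizing that perfectness plus criticality collapses $G$ to $K_k$, and that the $\mathcal{G}_1^{\ast}$ case is bounded by order rather than being ruled out entirely.
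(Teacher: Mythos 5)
Your proof is correct and follows essentially the same route as the paper: invoke Theorem~\ref{thm:gemcogemstructure}, collapse the perfect case to $K_k$, eliminate $\calH$ and $\mathcal{G}_2,\ldots,\mathcal{G}_{10}$ via Lemmas~\ref{lem:nocriticalinH} and~\ref{lem:nocritinG2G10}, and reduce $\mathcal{G}_1$ to clique expansions via Lemma~\ref{lem:module}. The only (immaterial) difference is the final order bound, where you use Lemma~\ref{lem:chromaticnumberofC5exp} to get $n\le 2k$ while the paper settles for the cruder $|V(G)|<5(k-2)$; both suffice for finiteness.
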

\begin{proof}
Let $G$ be a $k$-critcal (gem, co-gem)-free graph. By Theorem~\ref{thm:gemcogemstructure}, $G$ is either perfect, or in $\mathcal{G}_i$ for $i\in\{1,2,\ldots, 10\}$ or in $\calH$. If $G$ is perfect, then $G=K_k$. By Lemma~\ref{lem:nocriticalinH}, $G\not\in \calH$.  If $G\in \mathcal{G}_i$ for some $i\in\{1,2,\ldots, 10\}$, then, by Lemma~\ref{lem:nocritinG2G10}, $i=1$. By Lemma~\ref{lem:module}, $G$ must be a clique expansion of $G_1$. Using liberal upper bounds, each clique can have at most $k-2$ vertices and $|V(G_1)|=5$, so $|V(G)|<5(k-2)$. Therefore, there are only finitely many $k$-vertex-critical (gem,co-gem)-free graphs.
\end{proof}

\subsection{Exact number of $k$-vertex-critical (gem, co-gem)-free graphs}\label{subsec:exhaustivegemcogemgeneration}
Lemmas~\ref{lem:nocriticalinH}~and~\ref{lem:nocritinG2G10} give the stronger result that every $k$-vertex-critical (gem, co-gem)-free graph is either complete or in $\mathcal{G}_1^{\ast}$, i.e., a clique-expansion of $C_5$. The following theorem provides further restrictions on the structure of vertex-critical graphs in $\mathcal{G}_1^\ast$.

\begin{theorem}[\cite{Dhaliwal2017}]\label{thm:critC5cliqueexpcritrea} 
%
Let $k\ge 3$. A graph $G\in\mathcal{G}_1^{\ast}$ is $k$-vertex-critical if and only if $G[A_i]\cong K_{k_i}$ such that $\sum_{i=1}^{5}k_i=2k-1$ and for each $i$ mod $5$, $k_i+k_{i+1}\le k-1$.
\end{theorem}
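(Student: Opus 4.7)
\medskip\noindent\textbf{Proof proposal.}
The plan is to translate everything into arithmetic in $n=\sum_{i=1}^5 k_i$ and the clique number. Since $G\in\mathcal{G}_1^{\ast}$ is a clique expansion of $C_5$, the maximal cliques of $G$ are precisely the sets $A_i\cup A_{i+1}$ (indices mod $5$), so $\omega(G)=\max_i(k_i+k_{i+1})$. Moreover $\alpha(G)=2$, since any stable set meets each clique $A_i$ in at most one vertex and the maximum stable set of $C_5$ itself has size $2$.

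For the forward direction, assume $G$ is $k$-vertex-critical. To obtain the pair-sum bound, we fix an index $i$ and note that $\{1,\ldots,5\}\setminus\{i,i+1\}$ is non-empty, so we may pick a vertex $v$ in some $A_j$ with $j\notin\{i,i+1\}$. Then $G-v$ still contains the clique $A_i\cup A_{i+1}$, giving $k_i+k_{i+1}\le\omega(G-v)\le\chi(G-v)\le k-1$. For the size constraint, we use $\alpha(G)=2$: any $(k-1)$-coloring of $G-v$ (for any choice of $v$) partitions $V(G)\setminus\{v\}$ into at most $k-1$ stable sets, each of size at most $2$, forcing $n-1\le 2(k-1)$, i.e., $n\le 2k-1$. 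The same stable-set bound applied to $G$ itself gives $\chi(G)\ge\lceil n/2\rceil$, so $k=\chi(G)$ yields $n\ge 2k-1$. Combining, $n=2k-1$.

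For the converse, assume $\sum k_i=2k-1$ and $k_i+k_{i+1}\le k-1$ for every $i$. Then $\omega(G)\le k-1$ and $\lceil n/2\rceil=k$, so Lemma~\ref{lem:chromaticnumberofC5exp} gives $\chi(G)=k$. Now let $v\in V(G)$ be arbitrary. If removing $v$ leaves every $A_i$ non-empty, then $G-v\in\mathcal{G}_1^{\ast}$ and Lemma~\ref{lem:chromaticnumberofC5exp} applies again, yielding $\chi(G-v)=\max(\omega(G-v),\lceil(n-1)/2\rceil)=k-1$. Otherwise $G-v$ is a clique expansion of a proper induced subgraph of $C_5$, which is perfect, so $\chi(G-v)=\omega(G-v)\le k-1$. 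Either way $\chi(G-v)\le k-1$, so $G$ is $k$-vertex-critical.

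No real obstacle arises: once one recognizes that $\alpha(G)=2$ supplies the lower bound $\chi(G)\ge\lceil n/2\rceil$ for free and that Lemma~\ref{lem:chromaticnumberofC5exp} supplies the matching upper bound, both directions reduce to bookkeeping. The only mild subtlety is that $G-v$ may fall outside $\mathcal{G}_1^{\ast}$ when a singleton $A_i$ gets emptied, a case handled by the observation that clique expansions of proper induced subgraphs of $C_5$ (which are $P_4$-free) are perfect.
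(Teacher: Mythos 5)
The paper itself does not prove this theorem; it is imported from~\cite{Dhaliwal2017}, so there is no internal proof to compare against. Judged on its own, your argument has the right architecture (reduce everything to $\omega(G)=\max_i(k_i+k_{i+1})$, $\alpha(G)=2$, and Lemma~\ref{lem:chromaticnumberofC5exp}), and the converse direction is sound, including the case where deleting $v$ empties a singleton $A_i$ --- though your parenthetical ``which are $P_4$-free'' is false, since $P_4$ is itself a proper induced subgraph of $C_5$ and is not $P_4$-free; what you actually need, and what is true, is that these proper induced subgraphs are perfect and that clique expansions of perfect graphs are perfect.

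The genuine error is in the forward direction, in the step establishing $n\ge 2k-1$. From $\alpha(G)=2$ you get $\chi(G)\ge\lceil n/2\rceil$, and combining this with $\chi(G)=k$ reads $\lceil n/2\rceil\le k$, i.e.\ $n\le 2k$ --- an upper bound on $n$, not the lower bound you claim. As written, ``$\chi(G)\ge\lceil n/2\rceil$, so $k=\chi(G)$ yields $n\ge 2k-1$'' is a non sequitur. The repair is immediate and uses the other half of Lemma~\ref{lem:chromaticnumberofC5exp}: you have already shown $k_i+k_{i+1}\le k-1$ for all $i$, hence $\omega(G)\le k-1$, and then $k=\chi(G)=\max\left(\omega(G),\lceil n/2\rceil\right)$ forces $\lceil n/2\rceil=k$, i.e.\ $n\ge 2k-1$. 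Together with your correct bound $n-1\le 2(k-1)$ this gives $n=2k-1$. With that one step replaced, the proof goes through.
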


\begin{table}[h]
\begin{center}
\begin{tabular}{c|c|c|c|c|c|c|c|c|c|c|c|c|c|c|c|c}
$k$ & 1 & 2 & 3 & 4 & 5 & 6 & 7 & 8 & 9 & 10 & 11 & 12 & 13 & 14 & 15 & 16 \\ \hline
num$(k)$ & 1&1&2&2&4&6&11&17&27&39&58&80&112&148&197&253
\end{tabular}
\end{center}
\caption{num$(k)$ denotes the number of $k$-vertex-critical (gem, co-gem)-free graphs.}\label{tab:critnums}
\end{table}

This allows for efficient generation of all $k$-vertex-critical (gem,co-gem)-free graphs for all $k\le 16$ by simply computing all integer compositions of $2k-1$ such that adjacent parts sum to at most $k-1$ and the symmetries of $C_5$ are accounted for. The results of this are summarized in Table~\ref{tab:critnums} and Figure~\ref{fig:6critgemcogem} shows all $6$-vertex-critical (gem, co-gem)-free graphs. The graphs in graph6 format as well as Sagemath files used to generate them are available at~\cite{graphfiles}. 
%

\setcounter{subfigure}{0}
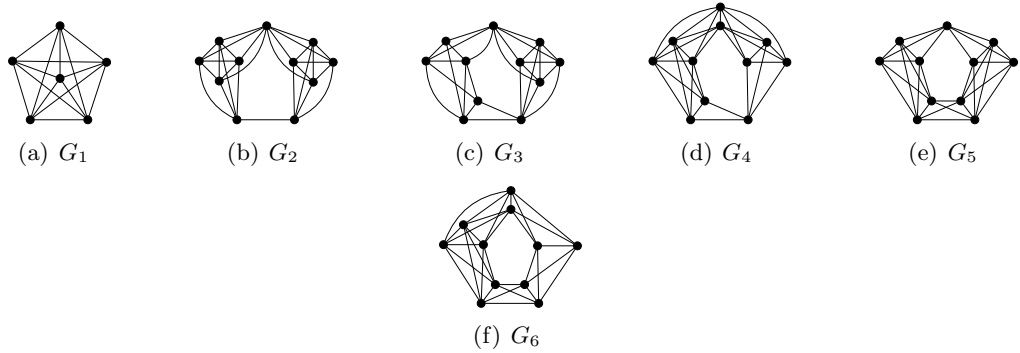
\begin{figure}[htb]
\def\c{0.25}
\def\r{1}
\centering
\subfigure[$G_1$]{
\scalebox{\c}{
\begin{tikzpicture}
\GraphInit[vstyle=Classic]
\Vertex[Lpos=180,L=\hbox{},x=\r*0.0cm,y=\r*3.1235cm]{v0}
\Vertex[L=\hbox{},x=\r*5.0cm,y=\r*3.0618cm]{v1}
\Vertex[Lpos=180,L=\hbox{},x=\r*0.942cm,y=\r*0.0cm]{v2}
\Vertex[Lpos=90,L=\hbox{},x=\r*2.5253cm,y=\r*5.0cm]{v3}
\Vertex[L=\hbox{},x=\r*4.0006cm,y=\r*0.0cm]{v4}
\Vertex[L=\hbox{},x=\r*2.5253cm,y=\r*2.2cm]{v5}
%
%

%
\Edge[](v0)(v1)
\Edge[](v0)(v2)
\Edge[](v0)(v3)
\Edge[](v0)(v4)
\Edge[](v0)(v5)
\Edge[](v1)(v2)
\Edge[](v1)(v3)
\Edge[](v1)(v4)
\Edge[](v1)(v5)
\Edge[](v2)(v3)
\Edge[](v2)(v4)
\Edge[](v2)(v5)
\Edge[](v3)(v4)
\Edge[](v3)(v5)
\Edge[](v4)(v5)
\end{tikzpicture}}}
\qquad
\subfigure[$G_2$]{
\scalebox{\c}{
\begin{tikzpicture}
\GraphInit[vstyle=Classic]

\Vertex[Lpos=90,L=\hbox{},x=\r*2.5253cm,y=\r*5.0cm]{v1}
\Vertex[L=\hbox{},x=\r*5.0cm,y=\r*4.12246017177982cm]{v2}
\Vertex[L=\hbox{},x=\r*5.0cm,y=\r*2.00113982822018cm]{v21}
\Vertex[L=\hbox{},x=\r*3.93933982822018cm,y=\r*3.0618cm]{v22}
\Vertex[L=\hbox{},x=\r*6.06066017177982cm,y=\r*3.0618cm]{v23}
\Vertex[L=\hbox{},x=\r*4.0006cm,y=\r*0.0cm]{v3}
\Vertex[Lpos=180,L=\hbox{},x=\r*0.942cm,y=\r*0cm]{v4}
\Vertex[Lpos=180,L=\hbox{},x=\r*0.0cm,y=\r*4.18416017177982cm]{v5} 
\Vertex[L=\hbox{},x=\r*-1.0606601717798cm,y=\r*3.12350000000000cm]{v51}
\Vertex[L=\hbox{},x=\r*1.06066017177982cm,y=\r*3.12350000000000cm]{v52}
\Vertex[L=\hbox{},x=\r*0cm,y=\r*2.06283982822018cm]{v53}
%
\Edge[](v1)(v2)
\Edge[](v3)(v2)
\Edge[](v4)(v3)
\Edge[](v5)(v4)
\Edge[](v1)(v5)

\Edge[](v2)(v21)
\Edge[](v2)(v22)
\Edge[](v2)(v23)
\Edge[](v21)(v22)
\Edge[](v21)(v23)
\Edge[](v22)(v23)

\Edge[](v1)(v22)
\Edge[](v1)(v23)

\Edge[](v3)(v21)
\Edge[](v3)(v22)

\Edge[](v5)(v51)
\Edge[](v5)(v52)
\Edge[](v5)(v53)
\Edge[](v51)(v52)
\Edge[](v51)(v53)
\Edge[](v52)(v53)

\Edge[](v1)(v51)
\Edge[](v1)(v52)

\Edge[](v4)(v52)
\Edge[](v4)(v53)

\tikzset{EdgeStyle/.append style = {bend right}}
\Edge[](v1)(v21)
\Edge[](v3)(v23)

\tikzset{EdgeStyle/.append style = {bend left}}
\Edge[](v1)(v53)
\Edge[](v4)(v51)

\end{tikzpicture}}}
\qquad
\subfigure[$G_3$]{
\scalebox{\c}{
\begin{tikzpicture}
\GraphInit[vstyle=Classic]
\Vertex[Lpos=90,L=\hbox{},x=\r*2.5253cm,y=\r*5.0cm]{v1}
\Vertex[L=\hbox{},x=\r*5.0cm,y=\r*4.12246017177982cm]{v2}
\Vertex[L=\hbox{},x=\r*5.0cm,y=\r*2.00113982822018cm]{v21}
\Vertex[L=\hbox{},x=\r*3.93933982822018cm,y=\r*3.0618cm]{v22}
\Vertex[L=\hbox{},x=\r*6.06066017177982cm,y=\r*3.0618cm]{v23}
\Vertex[L=\hbox{},x=\r*4.0006cm,y=\r*0.0cm]{v3}
\Vertex[Lpos=180,L=\hbox{},x=\r*0.942cm,y=\r*0cm]{v4}
\Vertex[Lpos=180,L=\hbox{},x=\r*1.69650961064564cm,y=\r*1cm]{v41}
\Vertex[Lpos=180,L=\hbox{},x=\r*0.0cm,y=\r*4.18416017177982cm]{v5} 
\Vertex[L=\hbox{},x=\r*-1.0606601717798cm,y=\r*3.12350000000000cm]{v51}
\Vertex[L=\hbox{},x=\r*1.06066017177982cm,y=\r*3.12350000000000cm]{v52}
%
\Edge[](v1)(v2)
\Edge[](v3)(v2)
\Edge[](v4)(v3)
\Edge[](v5)(v4)
\Edge[](v1)(v5)

\Edge[](v2)(v21)
\Edge[](v2)(v22)
\Edge[](v2)(v23)
\Edge[](v21)(v22)
\Edge[](v21)(v23)
\Edge[](v22)(v23)

\Edge[](v1)(v22)
\Edge[](v1)(v23)

\Edge[](v3)(v21)
\Edge[](v3)(v22)

\Edge[](v4)(v41)

\Edge[](v5)(v51)
\Edge[](v5)(v52)
\Edge[](v51)(v52)

\Edge[](v1)(v51)

\Edge[](v4)(v52)

\Edge[](v41)(v51)
\Edge[](v41)(v52)

\Edge[](v41)(v3)

\tikzset{EdgeStyle/.append style = {bend right}}
\Edge[](v1)(v21)
\Edge[](v3)(v23)

\tikzset{EdgeStyle/.append style = {bend left}}
\Edge[](v1)(v52)
\Edge[](v4)(v51)
\end{tikzpicture}}}
\qquad
\subfigure[$G_4$]{
\scalebox{\c}{
\begin{tikzpicture}
\GraphInit[vstyle=Classic]
\Vertex[Lpos=90,L=\hbox{},x=\r*2.5253cm,y=\r*5.0cm]{v1}
\Vertex[Lpos=90,L=\hbox{},x=\r*2.5253cm,y=\r*6.0cm]{v11}
\Vertex[L=\hbox{},x=\r*5.0cm,y=\r*4.12246017177982cm]{v2}
\Vertex[L=\hbox{},x=\r*3.93933982822018cm,y=\r*3.0618cm]{v22}
\Vertex[L=\hbox{},x=\r*6.06066017177982cm,y=\r*3.0618cm]{v23}
\Vertex[L=\hbox{},x=\r*4.0006cm,y=\r*0.0cm]{v3}
\Vertex[Lpos=180,L=\hbox{},x=\r*0.942cm,y=\r*0cm]{v4}
\Vertex[Lpos=180,L=\hbox{},x=\r*1.69650961064564cm,y=\r*1cm]{v41}
\Vertex[Lpos=180,L=\hbox{},x=\r*0.0cm,y=\r*4.18416017177982cm]{v5} 
\Vertex[L=\hbox{},x=\r*-1.0606601717798cm,y=\r*3.12350000000000cm]{v51}
\Vertex[L=\hbox{},x=\r*1.06066017177982cm,y=\r*3.12350000000000cm]{v52}
%
\Edge[](v1)(v2)
\Edge[](v4)(v3)
\Edge[](v5)(v4)
\Edge[](v1)(v5)

\Edge[](v2)(v22)
\Edge[](v2)(v23)
\Edge[](v22)(v23)

\Edge[](v1)(v22)
\Edge[](v1)(v23)

\Edge[](v3)(v2)

\Edge[](v3)(v22)

\Edge[](v4)(v41)

\Edge[](v5)(v51)
\Edge[](v5)(v52)
\Edge[](v51)(v52)

\Edge[](v1)(v51)
\Edge[](v1)(v52)

\Edge[](v1)(v11)
\Edge[](v11)(v22)

\Edge[](v11)(v5)
\Edge[](v11)(v52)

\Edge[](v4)(v52)
\Edge[](v4)(v51)

\Edge[](v2)(v11)

\Edge[](v41)(v5)
\Edge[](v41)(v51)
\Edge[](v41)(v52)

\Edge[](v41)(v3)
\Edge[](v3)(v23)
\tikzset{EdgeStyle/.append style = {bend right}}
\Edge[](v23)(v11)
\Edge[](v11)(v51)

\tikzset{EdgeStyle/.append style = {bend left}}

\end{tikzpicture}}}
\qquad
\subfigure[$G_5$]{
\scalebox{\c}{
\begin{tikzpicture}
\GraphInit[vstyle=Classic]
\Vertex[Lpos=90,L=\hbox{},x=\r*2.5253cm,y=\r*5.0cm]{v1}
\Vertex[L=\hbox{},x=\r*5.0cm,y=\r*4.12246017177982cm]{v2}
\Vertex[L=\hbox{},x=\r*3.93933982822018cm,y=\r*3.0618cm]{v22}
\Vertex[L=\hbox{},x=\r*6.06066017177982cm,y=\r*3.0618cm]{v23}
\Vertex[L=\hbox{},x=\r*4.0006cm,y=\r*0.0cm]{v3}
\Vertex[L=\hbox{},x=\r*
3.25149038935436cm,y=\r*1cm]{v31}
\Vertex[Lpos=180,L=\hbox{},x=\r*0.942cm,y=\r*0cm]{v4}
\Vertex[Lpos=180,L=\hbox{},x=\r*1.69650961064564cm,y=\r*1cm]{v41}
\Vertex[Lpos=180,L=\hbox{},x=\r*0.0cm,y=\r*4.18416017177982cm]{v5} 
\Vertex[L=\hbox{},x=\r*-1.0606601717798cm,y=\r*3.12350000000000cm]{v51}
\Vertex[L=\hbox{},x=\r*1.06066017177982cm,y=\r*3.12350000000000cm]{v52}
%
\Edge[](v1)(v2)
\Edge[](v3)(v2)
\Edge[](v4)(v3)
\Edge[](v5)(v4)
\Edge[](v1)(v5)

\Edge[](v2)(v22)
\Edge[](v2)(v23)
\Edge[](v22)(v23)

\Edge[](v1)(v22)
\Edge[](v1)(v23)

\Edge[](v3)(v31)

\Edge[](v2)(v31)
\Edge[](v22)(v31)
\Edge[](v23)(v31)
\Edge[](v4)(v31)
\Edge[](v41)(v31)

\Edge[](v3)(v22)

\Edge[](v4)(v41)

\Edge[](v5)(v51)
\Edge[](v5)(v52)
\Edge[](v51)(v52)

\Edge[](v1)(v51)
\Edge[](v1)(v52)

\Edge[](v4)(v52)
\Edge[](v4)(v51)

\Edge[](v41)(v5)
\Edge[](v41)(v51)
\Edge[](v41)(v52)

\Edge[](v41)(v3)
\Edge[](v3)(v23)
\tikzset{EdgeStyle/.append style = {bend right}}

\tikzset{EdgeStyle/.append style = {bend left}}
\end{tikzpicture}}}
\qquad
\subfigure[$G_6$]{
\scalebox{\c}{
\begin{tikzpicture}
\GraphInit[vstyle=Classic]
\Vertex[Lpos=90,L=\hbox{},x=\r*2.5253cm,y=\r*5.0cm]{v1}
\Vertex[Lpos=90,L=\hbox{},x=\r*2.5253cm,y=\r*6.0cm]{v11}
\Vertex[L=\hbox{},x=\r*3.93933982822018cm,y=\r*3.0618cm]{v22}
\Vertex[L=\hbox{},x=\r*6.06066017177982cm,y=\r*3.0618cm]{v23}
\Vertex[L=\hbox{},x=\r*4.0006cm,y=\r*0.0cm]{v3}
\Vertex[L=\hbox{},x=\r*
3.25149038935436cm,y=\r*1cm]{v31}
\Vertex[Lpos=180,L=\hbox{},x=\r*0.942cm,y=\r*0cm]{v4}
\Vertex[Lpos=180,L=\hbox{},x=\r*1.69650961064564cm,y=\r*1cm]{v41}
\Vertex[Lpos=180,L=\hbox{},x=\r*0.0cm,y=\r*4.18416017177982cm]{v5} 
\Vertex[L=\hbox{},x=\r*-1.0606601717798cm,y=\r*3.12350000000000cm]{v51}
\Vertex[L=\hbox{},x=\r*1.06066017177982cm,y=\r*3.12350000000000cm]{v52}
%
\Edge[](v4)(v3)
\Edge[](v5)(v4)
\Edge[](v1)(v5)

\Edge[](v22)(v23)

\Edge[](v1)(v22)
\Edge[](v1)(v23)

\Edge[](v3)(v31)

\Edge[](v22)(v31)
\Edge[](v23)(v31)
\Edge[](v4)(v31)
\Edge[](v41)(v31)

\Edge[](v23)(v11)

\Edge[](v3)(v22)

\Edge[](v4)(v41)

\Edge[](v5)(v51)
\Edge[](v5)(v52)
\Edge[](v51)(v52)

\Edge[](v1)(v51)
\Edge[](v1)(v52)

\Edge[](v1)(v11)
\Edge[](v11)(v22)

\Edge[](v11)(v5)
\Edge[](v11)(v52)

\Edge[](v4)(v52)
\Edge[](v4)(v51)

\Edge[](v41)(v5)
\Edge[](v41)(v51)
\Edge[](v41)(v52)

\Edge[](v41)(v3)
\Edge[](v3)(v23)
\tikzset{EdgeStyle/.append style = {bend right}}

\Edge[](v11)(v51)
\tikzset{EdgeStyle/.append style = {bend left}}
\end{tikzpicture}}}
\caption{All $6$-vertex-critical (gem, co-gem)-free graphs.}\label{fig:6critgemcogem} 
\end{figure}

\section{Conclusion}\label{sec:conclusion}
In this paper we showed that for all $k\ge 1$ and $\ell\ge 0$ that there are only finitely many $k$-vertex-critical $(\forbid)$-free graphs and only finitely many $k$-vertex-critical (gem, co-gem)-free graphs. Our results imply the existence of new polynomial-time certifying algorithms to determine the $k$-colorability of graphs in each respective family. This has substantially reduced the number of open cases for a complete dichotomy on which graphs $H$ are there only finitely many $k$-vertex-critical $H$-free graphs for all $k$. The only remaining open case is now just $H=P_4+\ell P_1$ (Generalized Problem~2 in~\cite{CameronHoangSawada2020}). 

The structural characterization we were able to prove for $k$-vertex-critical (gem, co-gem)-free graphs and a similar previous for $(P_5, \overline{P_5})$-free graphs~\cite{Dhaliwal2017} lead us to pose the following open problem in analogy to the similar open problem on $\chi$-bounding due to Gy\'{a}rf\'{a}s~\cite{Gyarfas1987}.

\begin{problem}\label{prob:fco-Fcrit}
For which forests $F$ is there only a finite number of $k$-vertex-critical $(F,\overline{F})$-free graphs for all $k$?
\end{problem}
\noindent A natural place to start on Problem~\ref{prob:fco-Fcrit} would be to consider other forests $F$ of order $5$. In particular, $F=P_2+P_3$ would be of interest given the very recent work of Char and Kartick~\cite{CharKarthick2022}.
\bibliographystyle{abbrv}
\bibliography{P3+ell-2022-06-03}

\end{document}